\newtheorem{teo}{Theorem}[section]
\newtheorem{pro}[teo]{Proposition}
\newtheorem{coro}[teo]{Corollary}
\newtheorem{lem}[teo]{Lemma}
\theoremstyle{definition}
\newtheorem{defi}[teo]{Definition}
\newtheorem{exam}[teo]{Example}
\newtheorem{rem}[teo]{Remark}
\newcommand{\N}{\mathbb N}
\newcommand{\Z}{\mathbb Z}
\newcommand{\R}{\mathbb R}
\newcommand{\K}{\mathbb K}
\newcommand{\pol}{\mathbb K[x_1,\ldots,x_n]}
\newcommand{\plx}{\mathbb K[x]}
\newcommand{\pls}{\mathbb K[S]}
\newcommand{\az}{\mathfrak{a}_z}
\newcommand{\bz}{\mathfrak{b}_{(z,\lambda)}}
\newcommand{\A}{\mathcal{A}}
\newcommand{\ia}{I_{\mathcal{A}}}
\newcommand{\iaw}{I_{{\mathcal{A}}_w}}
\newcommand{\zz}{\underline{0}}
\newcommand{\pia}{\pi_{\A}}
\newcommand{\piw}{\pi_{\A_w}}
\newcommand{\sat}{\overline{\N\A}}
\newcommand{\stw}{\overline{\N\A_w}}
\newcommand{\sA}{\sim_{\A}}
\newcommand{\LT}{\operatorname{LT}}
\newcommand{\Bet}{\operatorname{Betti}}
\newcommand{\Betm}{\operatorname{Betti-min}}
\newcommand{\igno} [1] {}
\providecommand{\keywords}[1]{{\textbf{Keywords:}} #1}
\begin{document}

\title{A semigroup defining the Gr\"obner degeneration of a toric ideal}

\author{Hern\'an de Alba Casillas\footnote{Research supported by Consejo Nacional de Ciencia y Tecnolog\'ia grant A1-S-30482.}, Daniel Duarte, Ra\'ul Vargas Antuna}

\maketitle

\begin{abstract}
We give an explicit set of generators for the semigroup of the Gr\"obner degeneration of a toric ideal. This set of generators is used to study algebraic properties of the semigroup it generates: approximation of semigroups, non-preservation of saturation, Betti elements, uniqueness of presentations, and M\"obius functions.
\end{abstract}

\tableofcontents

\noindent\keywords{Gr\"obner degeneration, toric ideal, semigroups, generators.}



\section*{Introduction} 

The Gr\"obner degeneration of an ideal is a flat family that deforms the ideal into a simpler one, for instance, a monomial ideal. Together with the theory of Gr\"obner bases, Gr\"obner degenerations are a powerful tool in the study of algebraic and geometric properties of an ideal or the corresponding algebraic variety. 

In this paper we study Gr\"obner degenerations of toric ideals. The starting point for the whole discussion is the fact that the Gr\"obner degeneration of a toric ideal is also a toric ideal. Hence, it is defined by some semigroup. Several natural questions can be deduced concerning the algebraic or combinatorial properties of this semigroup. Yet, somehow surprisingly, they have not been systematically explored as far as we know. This paper aims at giving a first step towards that study.

Let $S\subset\Z^d$ be a semigroup defining a toric ideal $I$. Let $S_w\subset\Z^{d+1}$ be the semigroup of the Gr\"obner degeneration of $I$ with respect to some weight vector $w$. Our first result provides an explicit set of generators of $S_w$ (Theorem \ref{main}). The resulting set of generators is our main tool to explore to what extent properties of the semigroup $S$ are preserved for $S_w$.

We first study approximations of semigroups. It is known that the semigroup $S$ can be approximated by its saturation $\overline{S}$. This means that there exists $a\in S$ such that $a+\overline{S}\subset S$. We prove that the corresponding result for the semigroup $S_w$ also involves the element $a\in S$ (Theorem \ref{approx}). We stress that approximation of semigroups has a great deal of applications, including the study of valuations on graded algebras, the computation of dimension and degree of projective varieties, and intersection theory \cite{KK}.

Next, we show that saturation is not preserved under Gr\"obner degenerations. We exhibit an infinite family of saturated semigroups for which a degeneration is saturated only for finitely many of them (Section \ref{sat}). This suggests that saturation is actually rarely preserved under degenerations. Even though this phenomenon was somehow predictable, there are no explicit examples illustrating this fact in the literature, as far as we are aware. 

We then study Betti elements of semigroups, a concept that is closely related to syzygies of toric ideals \cite{GR1}. As before, the goal is to study the behaviour of Betti elements under Gr\"obner degenerations. We first show that each Betti element of the semigroup $S$ gives rise to a Betti element of $S_w$ (Theorem \ref{inclusion}). However, in general not all Betti elements of $S_w$ can be described like this (Example \ref{scroll}). 

A particular case that we study is that of semigroups having a unique minimal generating set. These semigroups appear, for instance, in Algebraic Statistics \cite{T}. We show that this property is preserved in the degeneration for certain families (Sections \ref{unique min} and \ref{unique min 2}). In our opinion, this gives enough evidence to conjecture that the uniqueness of a minimal generating set is always preserved under Gr\"obner degenerations (Conjecture \ref{conjdAD}).

The last section concerns M\"obius functions of semigroups. This notion was introduced by G.-C. Rota and has a great deal of applications \cite{R}. For a long time, M\"obius functions were investigated only for numerical semigroups, although more general results were obtained in recent years \cite{CGMR}. In particular, an explicit formula for the M\"obius function of semigroups with unique Betti element was given in \cite[Theorem 4.1]{CGMR}. As an application of this formula, we conclude this paper by showing that the M\"obius function of $S_w$ can be computed using only data of $S$ and $w$, whenever both $S$ and $S_w$ have a unique Betti element (Theorem \ref{mu hiper}).


\section{An explicit semigroup defining the Gr\"obner degeneration of a toric ideal} 

Let us start by recalling the construction of a Gr\"obner degeneration of an ideal with respect to a weight vector.

Let $J\subset\pol$ be an ideal and $w=(w_1,\ldots,w_n)\in\N^n$. For $f=\sum c_ux^u\in\pol$, let $d(f):=\max\{w\cdot u|c_u\neq0\}$. Denote
$$f_t:=t^{d(f)}f(t^{-w_1}x_1,\ldots,t^{-w_n}x_n)\in\pol[t].$$
The ideal $J(t,w):=\langle f_t|f\in J\rangle$ is called the \textit{Gr\"obner degeneration of} $J$. It is a classical fact that $J(t,w)$ gives rise to a flat family deforming the affine algebraic variety defined by $J$ to the variety of the initial ideal of $J$ with respect to $w$ \cite[Theorem 15.17]{Eis}.

The following remark, which will be constantly used in this paper, provides a method to compute generators for the ideal $J(t,w)$.

\begin{rem}\label{gente}
Let $G=\{g_1,\ldots,g_s\}\subset J$ be a Gr\"obner basis of $J$ with respect to any monomial order refining $w$. Then $G(t,w)=\{(g_1)_t,\ldots,(g_s)_t\}\subset J(t,w)$ is a generating set of $J(t,w)$ \cite[Exercise 15.25]{Eis}. 
\end{rem}

Our first goal is to give an explicit generating set of the semigroup defining the Gr\"obner degeneration of a toric ideal. This generating set is described in terms of generators of the original semigroup and the weight vector.

The following notation will be constantly used throughout this paper.

Let $\A=\{a_1,\ldots,a_n\}\subset\Z^d$ be a finite subset. Consider the following map of semigroups,
\begin{align}
\pi_{\A}:\N^n&\rightarrow\Z^d\notag\\
(u_1,\ldots,u_n)&\mapsto u_1a_1+\cdots+u_na_n.\notag
\end{align}
The image of $\pia$ is denoted as $\N\A$. The previous map induces a map of $\K$-algebras,
\begin{align}
\hat{\pi}_{\A}:\pol&\rightarrow\K[z_1,z_1^{-1},\ldots,z_d,z_d^{-1}]\notag\\
x_i & \mapsto z^{a_i}.\notag
\end{align}
Denote $\ia:=\ker\hat{\pi}_{\A}$. It is well known that this ideal is prime and generated by binomials. More precisely \cite[Chapter 4]{St}, 
$$\ia=\langle x^{u}-x^{v}|\pi_{\A}(u)=\pi_{\A}(v)\rangle.$$
The ideal $\ia$ is called the \textit{toric ideal defined by} $\A$.

The following theorem describes an explicit subset of $\Z^{d+1}$ that determines the Gr\"obner degeneration of $\ia$. 

\begin{teo}\label{main}
Let $\A=\{a_1,\ldots,a_n\}\subset\Z^d$ and $\ia$ the corresponding toric ideal. Let $w=(w_1,\ldots,w_n)\in\N^n$. Let 
$$\A_w:=\{(a_1,w_1),\ldots,(a_n,w_n),(0,\ldots,0,1)\}\subset\Z^{d+1}.$$
Then $\ia(t,w)=\iaw$.
\end{teo}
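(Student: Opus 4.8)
The plan is to prove the two inclusions $(\ia)_t\subseteq\iaw$ and $\iaw\subseteq(\ia)_t$ separately, the key device being to repackage the substitution $x_i\mapsto t^{-w_i}x_i$ as an honest ring homomorphism. Let $\theta\colon\pol\to\K[x_1,\ldots,x_n,t,t^{-1}]$ be the $\K$-algebra map determined by $x_i\mapsto t^{-w_i}x_i$. Unwinding the definition of $f_t$ one has $f_t=t^{d(f)}\theta(f)$, and this is a genuine element of $\pol[t]$ precisely because $d(f)=\max\{w\cdot u\mid c_u\neq0\}$ is exactly what is needed to clear all negative powers of $t$. The operation $f\mapsto f_t$ is not itself a homomorphism, but $\theta$ is, and that is what lets it interact cleanly with the toric ideal. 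I would record two elementary computations: (i) for $(u,u_0)\in\N^n\times\N$ one has $\pi_{\A_w}(u,u_0)=(\pi_\A(u),\,w\cdot u+u_0)$, so by the binomial description of toric ideals recalled above, $\iaw$ is generated by the binomials $x^ut^{u_0}-x^vt^{v_0}$ with $\pi_\A(u)=\pi_\A(v)$ and $w\cdot u+u_0=w\cdot v+v_0$; (ii) $\hat\pi_{\A_w}\circ\theta$ sends $x_i\mapsto t^{-w_i}x_i\mapsto z_{d+1}^{-w_i}\,z^{a_i}z_{d+1}^{w_i}=z^{a_i}$, so that $\hat\pi_{\A_w}\circ\theta=\hat\pi_\A$, after viewing the target of $\hat\pi_\A$ inside the larger Laurent ring $\K[z_1^{\pm1},\ldots,z_{d+1}^{\pm1}]$.

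For $(\ia)_t\subseteq\iaw$: since $(\ia)_t=\langle f_t\mid f\in\ia\rangle$ it suffices to check that $f_t\in\iaw$ for every $f\in\ia$. Using $f_t=t^{d(f)}\theta(f)$ together with (ii), we get $\hat\pi_{\A_w}(f_t)=z_{d+1}^{d(f)}\,\hat\pi_{\A_w}(\theta(f))=z_{d+1}^{d(f)}\,\hat\pi_\A(f)=0$, hence $f_t\in\ker\hat\pi_{\A_w}=\iaw$. (Alternatively, this inclusion can be extracted from Remark \ref{gente} by taking a binomial Gr\"obner basis of $\ia$ with respect to a monomial order refining $w$ and checking each $(g_i)_t$ by hand; but routing through $\theta$ has the advantage of not requiring any understanding of $f_t$ for a general element $f$.)

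For $\iaw\subseteq(\ia)_t$: take a generator $x^ut^{u_0}-x^vt^{v_0}$ as in (i), and after swapping the two terms if necessary assume $w\cdot u\geq w\cdot v$, so that $v_0-u_0=w\cdot u-w\cdot v\geq0$. Put $b:=x^u-x^v$; then $\pi_\A(u)=\pi_\A(v)$ gives $b\in\ia$, a direct computation gives $b_t=x^u-t^{\,w\cdot u-w\cdot v}x^v$, and therefore $x^ut^{u_0}-x^vt^{v_0}=t^{u_0}\,b_t\in(\ia)_t$. Combining the two inclusions finishes the proof.

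I do not anticipate a serious obstacle. The only care needed is in the bookkeeping of $t$-exponents — the maximum hidden in $d(f)$, and the two symmetric cases $w\cdot u\gtrless w\cdot v$ — and in checking that $\theta$ is well defined as a map into the Laurent polynomial ring so that the identity $\hat\pi_{\A_w}\circ\theta=\hat\pi_\A$ genuinely holds. Once that identity is in hand, the first inclusion is immediate and the second is the explicit binomial manipulation above.
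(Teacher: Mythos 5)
Your proof is correct, and it differs from the paper's in one of the two inclusions. For $\iaw\subset\iat$ your argument is essentially the one in the paper: reduce to binomial generators $x^ut^{u_0}-x^vt^{v_0}$ of $\iaw$, assume $w\cdot u\geq w\cdot v$, and exhibit the generator as $t^{u_0}(x^u-x^v)_t$ with $x^u-x^v\in\ia$. For the inclusion $\iat\subset\iaw$, however, the paper invokes Remark \ref{gente}: it chooses a binomial Gr\"obner basis $G$ of $\ia$ with respect to an order refining $w$, uses that $G_t$ generates $\iat$, and then checks $g_t\in\iaw$ for each $g\in G$ via the relation $\piw(u,u_{n+1})=(\pia(u),(u,u_{n+1})\cdot(w,1))$. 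You instead prove the stronger statement that $f_t\in\iaw$ for \emph{every} $f\in\ia$, by factoring $f_t=t^{d(f)}\theta(f)$ through the homomorphism $\theta\colon x_i\mapsto t^{-w_i}x_i$ and using the identity $\hat{\pi}_{\A_w}\circ\theta=\hat{\pi}_{\A}$ inside the Laurent ring $\K[z_1^{\pm1},\ldots,z_{d+1}^{\pm1}]$ (legitimate, since $t\mapsto z_{d+1}$ is a unit there and $f_t$ itself lies in $\pol[t]$, so the extended map agrees with $\hat{\pi}_{\A_w}$ on it). This makes that direction independent of the Gr\"obner-basis machinery and of the binomial shape of the generators, which is a genuine simplification; the trade-off is only expository, since the paper's route keeps the whole proof at the level of the single semigroup identity and sets up the Gr\"obner basis bookkeeping that is reused later (e.g., in Lemma \ref{min w} and Theorem \ref{inclusion}). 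No gap to report.
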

\begin{proof}
Let $u=(u_1,\ldots,u_n)\in\N^n$ and $u_{n+1}\in\N$. We have the following relation:
\begin{align}\label{eq}
\piw(u,u_{n+1})&=u_1(a_1,w_1)+\cdots+u_n(a_n,w_n)+u_{n+1}(0,\ldots,0,1)\notag\\
&=(u_1a_1+\cdots+u_na_n,(u,u_{n+1})\cdot (w,1))\notag\\
&=(\pi_{\A}(u),(u,u_{n+1})\cdot (w,1)).
\end{align}
Let us first show that $\ia(t,w)\subset\iaw$. Let $G\subset\ia$ be a Gr\"obner basis consisting of binomials with respect to a refined monomial order $>_w$ and such that $G(t,w)=\{g_t|g\in G\}$ generates $\ia(t,w)$ (see Remark \ref{gente}). We show that $G(t,w)\subset\iaw$.
Let $g=x^u-x^v\in G$, where $\pi(u)=\pi(v)$ and $x^u >_w x^v$. Then $g_t=x^u-x^vt^{w\cdot u-w\cdot v}$. By (\ref{eq}) it follows that
\begin{align}
\piw(v,w\cdot u-w\cdot v)&=(\pia(v),(v,w\cdot u-w\cdot v)\cdot(w,1))\notag\\
&=(\pia(v),w\cdot u)\notag\\
&=(\pia(u),w\cdot u)\notag\\
&=\piw(u,0).\notag
\end{align}
Thus, $g_t\in\iaw$ and so $\ia(t,w)\subset\iaw$.

Now we show $\iaw\subset\ia(t,w)$. Let $x^ut^{u_{n+1}}-x^vt^{v_{n+1}}\in\iaw$, i.e., $\piw(u,u_{n+1})=\piw(v,v_{n+1})$. By (\ref{eq}),
$$(\pia(u),(u,u_{n+1})\cdot(w,1))=(\pia(v),(v,v_{n+1})\cdot(w,1)).$$
In particular, $\pia(u)=\pia(v)$ and so $x^u-x^v\in\ia$. Now assume that $w\cdot u\geq w\cdot v$. Hence, $v_{n+1}=u_{n+1}+u\cdot w-v\cdot w$, and we conclude that
\begin{align}
x^ut^{u_{n+1}}-x^vt^{v_{n+1}}&=x^ut^{u_{n+1}}-x^vt^{u_{n+1}+u\cdot w-v\cdot w}\notag\\
&=t^{u_{n+1}}(x^u-x^vt^{u\cdot w-v\cdot w})\notag\\
&=t^{u_{n+1}}(x^u-x^v)_t\in\ia(t,w).\notag
\end{align}
\end{proof}

The previous theorem is our main tool to explore several combinatorial and algebraic properties of the Gr\"obner degeneration of toric ideals.

\begin{rem}
Garc\'ia-Puente, Sottile and Zhu introduced a set similar to $\A_w$ to define regular subdivisions of $\A$ \cite{GSZ,Z}.
\end{rem}

We conclude this section by defining the class of semigroups considered in this paper.

\begin{defi}\label{aff sem}
An affine semigroup $S$ is a finitely generated submonoid of $\Z^d$ such that the group generated by $S$ is $\Z^d$.
\end{defi}

\begin{rem}\label{Aw aff sem}
Given an affine semigroup generated by $\A=\{a_1,\ldots,a_n\}\subset\Z^d$ and $w\in\N^n$, we have that $\N\A_w\subset\Z^{d+1}$ is a finitely generated semigroup. In addition, since $\N\A$ generates $\Z^d$ as a group and $(0,\ldots,0,1)\in\A_w$, we obtain that $\N\A_w$ generates $\Z^{d+1}$ as a group. Hence, $\N\A_w$ is an affine semigroup.
\end{rem}


\section{Approximations of semigroups}

Our first application of Theorem \ref{main} concerns approximations of affine semigroups. 

A well-known result in the theory of affine semigroups states that such semigroups can be approximated by their saturation. Recall that the saturation of an affine semigroup $S\subset\Z^d$ is defined as $\overline{S}:=\R_{\geq0}S\cap\Z^d$. Equivalently, $\overline{S}=\{a\in\Z^d|ka\in S, \mbox{for some }k\in\N\}$.

\begin{teo}\cite[Theorem 1.4]{KK}
Let $S\subset\Z^d$ be an affine semigroup. There exists $a\in S$ such that $a+\overline{S}\subset S$. 
\end{teo}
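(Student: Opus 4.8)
The plan is to prove that $\N\A$ contains a translate of its saturation $\overline{\N\A}$ by exhibiting a concrete element $a$ and using finiteness arguments. First I would recall the standard structural fact that, since $\A$ is finite, the saturation $\overline{\N\A} = \Rz\A \cap \Z^d$ is a finitely generated semigroup (this is Gordan's lemma: the rational cone $\Rz\A$ is polyhedral, hence $\overline{\N\A}$ is finitely generated), and moreover $\N\A \subseteq \overline{\N\A}$ with the two semigroups generating the same group and the same cone. Let $b_1,\ldots,b_m$ be a finite generating set for $\overline{\N\A}$. For each $b_j$, since $b_j \in \Rz\A \cap \Z^d$ and $\N\A$ and $\overline{\N\A}$ generate the same group $\Z\A$, some positive integer multiple $N_j b_j$ lies in $\N\A$; more to the point, there is a positive integer $k_j$ such that $b_j, 2b_j, \ldots$ eventually all land in $\N\A$, and by a standard numerical-semigroup type argument along the ray through $b_j$ one gets an integer $c_j$ with $c_j b_j + \N_{\geq 0} b_j \subseteq \N\A$.

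The key step is then to assemble these into a single shift. I would set $a := \sum_{j=1}^m c_j b_j \in \N\A$ (or a suitable larger element of $\N\A$ dominating it) and claim $a + \overline{\N\A} \subseteq \N\A$. To verify this, take an arbitrary $x \in \overline{\N\A}$ and write $x = \sum_j \lambda_j b_j$ with $\lambda_j \in \N$ using the generating set. Then
\begin{align}
a + x = \sum_{j=1}^m (c_j + \lambda_j) b_j,\notag
\end{align}
and I would want each summand $(c_j + \lambda_j)b_j$ to be expressible inside $\N\A$ so that the whole sum is. The cleanest route is to first establish the one-generator statement: for each $j$ there is $c_j$ with $n b_j \in \N\A$ for all $n \geq c_j$. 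This follows because the set $\{ n \in \N : n b_j \in \N\A\}$ is a subsemigroup of $\N$ containing some positive integer (as $\Z\A = \Z\overline{\N\A}$ forces a relation $N b_j = (\text{element of }\N\A) - (\text{element of }\N\A)$, and clearing denominators / adding a large element of $\N\A$ shows a positive multiple works), hence is cofinite. Summing the tail conditions gives $a + x \in \N\A$.

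The main obstacle is the passage from "some positive multiple of $b_j$ lies in $\N\A$" to "all sufficiently large multiples lie in $\N\A$", and then from the per-generator statement to the simultaneous statement for arbitrary nonnegative combinations; one has to be careful that choosing $a$ as the sum of the $c_j b_j$ genuinely absorbs every $x \in \overline{\N\A}$, which relies on $\overline{\N\A}$ being generated by the $b_j$ as a monoid (so that $\lambda_j \geq 0$) together with the cofiniteness of each $\{n : nb_j \in \N\A\}$. An alternative, perhaps slicker, approach would be to invoke that $\overline{\N\A}$ is a finite module over $\N\A$ — i.e., $\overline{\N\A} = \bigcup_{i=1}^r (e_i + \N\A)$ for finitely many $e_i \in \overline{\N\A}$ — which is standard from integral closure of semigroup rings, and then take $a$ to be an element of $\N\A$ with $a + e_i \in \N\A$ for all $i$ (using that each $e_i$ has a positive multiple, hence a translate, landing in $\N\A$); I would likely present whichever of these two formulations is shortest to state rigorously given what the paper has already set up.
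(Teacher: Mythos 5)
Your main argument breaks at the cofiniteness step, and the break is not just a missing detail. First, the inference you use — ``$\{n\in\N : nb_j\in\N\A\}$ is a subsemigroup of $\N$ containing a positive integer, hence cofinite'' — is false as stated: $2\N$ is such a subsemigroup and is not cofinite (you would need elements of coprime ``slopes''). More seriously, the conclusion you need, namely that for every Hilbert-basis generator $b_j$ of $\sat$ all sufficiently large multiples $nb_j$ lie in $\N\A$, fails in general. Take $\A=\{(2,0),(0,1),(1,1)\}\subset\Z^2$: here $\Z\A=\Z^2$, $\Rz\A$ is the first quadrant, so $\sat=\N^2$, whose Hilbert basis contains $b=(1,0)$; but $(n,0)=\alpha(2,0)+\beta(0,1)+\gamma(1,1)$ forces $\beta=\gamma=0$, so $nb\in\N\A$ exactly when $n$ is even, and $\{n: nb\in\N\A\}=2\N$ is not cofinite. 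The theorem itself still holds there (e.g.\ $a=(0,1)$ works), so it is your method — choosing $a=\sum_j c_jb_j$ and absorbing $a+x=\sum_j(c_j+\lambda_j)b_j$ generator by generator — that collapses, not the statement. A further point: under the paper's definition $\overline{S}=\Rz S\cap\Z^d$, your repeated assertion that $\N\A$ and $\sat$ generate the same group is false in general (already for $\A=\{2\}\subset\Z$, where the statement of the theorem itself fails); the result really concerns the saturation inside $\Z\A$, as in \cite[Theorem 1.4]{KK}, and since you lean on the same-group property in several steps you must make that reduction explicit.

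The alternative you sketch in your last sentences is the viable route, and it is the one closest in spirit to what the paper actually does: the paper does not prove this statement (it is quoted from \cite{KK}), but its proof of the analogous Theorem \ref{approx}, via Lemma \ref{gen}, rests on exactly the decomposition you allude to — every $x\in\sat$ splits as an element of $\N\A$ plus a lattice point of the bounded parallelepiped $\bigl\{\sum_i t_ia_i : 0\leq t_i\leq 1\bigr\}$, so $\sat=\bigcup_{i=1}^r(e_i+\N\A)$ with finitely many $e_i$. However, your justification that each $e_i$ can be translated into $\N\A$ (``each $e_i$ has a positive multiple, hence a translate, landing in $\N\A$'') is a non sequitur: $Ne_i\in\N\A$ does not give you $(N-1)e_i\in\N\A$. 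The correct and easy fix is the group argument: $e_i\in\Z\A$, so $e_i=u_i-v_i$ with $u_i,v_i\in\N\A$; setting $a:=\sum_{i=1}^r v_i\in\N\A$ gives $a+e_i\in\N\A$ for every $i$, hence $a+\sat\subset\N\A$. If you rewrite your proof around this finite-module decomposition plus the difference trick, and state the lattice hypothesis correctly, it becomes a complete elementary proof; as currently written, the principal argument is unsalvageable because its key lemma is false.
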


Our next goal is to study this theorem for the affine semigroup of the Gr\"obner degeneration of $\ia$.

\begin{lem}\label{gen}
Let $\A=\{a_1,\ldots,a_n\}\subset\Z^d$. Assume that there exists an element $a\in\N\A$ such that $a+\sat\subset\N\A$. Let $w\in\N^n$. Consider the following sets:
\begin{align}
C&:=\Big\{\sum_{i=1}^n\alpha_ia_i|\alpha_i\in\R, 0\leq\alpha_i\leq1\Big\}.\notag\\
C_w&:=\Big\{\sum_{i=1}^n\alpha_i(a_i,w_i)+\alpha_{n+1}(0,\ldots,0,1)|\alpha_i\in\R, 0\leq\alpha_i\leq1\Big\}.\notag
\end{align}
For each element $(c,c_{n+1})\in C_w\cap\Z^{d+1}$, where $c\in\Z^{d}$, there exists $\delta\in\N$ such that $(a,\delta)\in\N\A_w$ and $(a,\delta)+(c,c_{n+1})\in\N\A_w$.
\end{lem}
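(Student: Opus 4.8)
The plan is to leverage Theorem~\ref{main} indirectly via its proof, namely the key identity~(\ref{eq}), which says that for any $(u,u_{n+1})\in\N^{n+1}$ one has $\piw(u,u_{n+1})=(\pia(u),(u,u_{n+1})\cdot(w,1))$; in particular the first $d$ coordinates of any element of $\N\A_w$ lie in $\N\A$, and the last coordinate can be increased freely by adding multiples of $(0,\ldots,0,1)$. So membership of $(a,\delta)$ in $\N\A_w$ for suitable $\delta$ should follow as soon as $a\in\N\A$, simply by writing $a=\sum u_ia_i$ and choosing $\delta=u_1w_1+\cdots+u_nw_n$ (then $(a,\delta)=\piw(u,0)\in\N\A_w$), and any larger $\delta$ works too by adding copies of the last generator. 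The real content is the second assertion: that $(a,\delta)+(c,c_{n+1})\in\N\A_w$, i.e. that translating by $(a,\delta)$ pulls the lattice point $(c,c_{n+1})$ of the fundamental region $C_w$ back into the semigroup.

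First I would reduce $(c,c_{n+1})\in C_w\cap\Z^{d+1}$ to usable data: write $(c,c_{n+1})=\sum_{i=1}^n\alpha_i(a_i,w_i)+\alpha_{n+1}(0,\ldots,0,1)$ with $0\le\alpha_i\le1$, so that $c=\sum_{i=1}^n\alpha_ia_i\in C$ and $c_{n+1}=\sum_{i=1}^n\alpha_iw_i+\alpha_{n+1}$. Since $c\in C\cap(\R_{\ge0}\A)$ and $c\in\Z^d$ (it is the first $d$ coordinates of a lattice point), we have $c\in\sat$, hence by hypothesis $a+c\in\N\A$; write $a+c=\sum_{i=1}^n\beta_ia_i$ with $\beta_i\in\N$. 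Then by~(\ref{eq}), $(a+c,\sum\beta_iw_i)=\piw(\beta,0)\in\N\A_w$. The natural candidate is therefore $\delta:=\sum_{i=1}^n\beta_iw_i-c_{n+1}$, provided this is a nonnegative integer; one checks $(a,\delta)+(c,c_{n+1})=(a+c,\sum\beta_iw_i)\in\N\A_w$ by construction, and $(a,\delta)\in\N\A_w$ needs to be argued separately.

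The main obstacle, and the place where the argument must be done carefully, is controlling $\delta$: one must show it is a \emph{nonnegative integer} and that $(a,\delta)$ genuinely lies in $\N\A_w$. Integrality is not automatic because $c_{n+1}=\sum\alpha_iw_i+\alpha_{n+1}$ involves the real numbers $\alpha_i$; however $(c,c_{n+1})\in\Z^{d+1}$ forces $c_{n+1}\in\Z$, so $\delta\in\Z$. For nonnegativity one would use that $a$ can be represented with enough ``room'' — concretely, fix one fixed representation $a=\sum u_ia_i$, $u_i\in\N$, and note that for the representation $a+c=\sum\beta_ia_i$ the weight $\sum\beta_iw_i$ need not dominate $c_{n+1}$ a priori, so instead I would first choose $\delta_0$ large enough that $(a,\delta_0)\in\N\A_w$ (possible by adding copies of $(0,\ldots,0,1)$ to $\piw(u,0)=(a,\sum u_iw_i)$), then observe that $(a,\delta_0)+(c,c_{n+1})$ has first $d$ coordinates $a+c\in\N\A$ and last coordinate $\delta_0+c_{n+1}$; since $\delta_0+c_{n+1}\ge\delta_0\ge\sum u_iw_i$, one can take $\delta_0$ so that $\delta_0+c_{n+1}\ge\sum\beta_iw_i$ and then $(a+c,\delta_0+c_{n+1})=\piw(\beta,\delta_0+c_{n+1}-\sum\beta_iw_i)\in\N\A_w$. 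Setting $\delta=\delta_0$ completes both claims simultaneously. I expect the write-up to hinge on making the bookkeeping between the three representations (of $a$, of $c$, of $a+c$) and the weight sums consistent; everything else is a direct appeal to~(\ref{eq}) and to the defining property of $a$.
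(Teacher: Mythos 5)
Your argument is correct and is essentially the paper's own proof: you use $c\in C\cap\Z^d\subset\sat$ to get $a+c=\sum_{i=1}^n\beta_ia_i\in\N\A$, fix a representation $a=\sum_{i=1}^n u_ia_i$, and take $\delta\geq\max\{\sum_{i=1}^n u_iw_i,\ \sum_{i=1}^n\beta_iw_i-c_{n+1}\}$, which is exactly the choice made in the paper's hardest case (2.2), with the same appeal to the identity (\ref{eq}). The only difference is cosmetic: the paper first distinguishes whether $(c,c_{n+1})$ or $(a,0)+(c,c_{n+1})$ already lies in $\N\A_w$, whereas your single uniform choice of $\delta$ handles all of these cases at once.
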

\begin{proof}
Let $(c,c_{n+1})\in C_w\cap\Z^{d+1}$. Since $a\in\N\A$, we can write $a=\sum_{i=1}^nl_ia_i$. Now consider the following cases.
\begin{itemize}
\item[(1)] Suppose that $(c,c_{n+1})\in\N\A_w$. Let $\delta:=\sum_{i=1}^nl_iw_i$. Then $(a,\delta)=\sum_{i=1}^nl_i(a_i,w_i)\in\N\A_w$. In particular, $(a,\delta)+(c,c_{n+1})\in\N\A_w$.
\item[(2)] Suppose that $(c,c_{n+1})\notin\N\A_w$. Notice that $(c,c_{n+1})\in C_w\cap\Z^{d+1}$ implies that $c\in C\cap\Z^d\subset\overline{\N\A}$. In particular, $a+c\in\N\A$. Let $a+c=\sum_{i=1}^n\beta_ia_i$ for some $\beta_i\in\N$. Now consider the following cases.
\end{itemize}
\begin{itemize}
\item[(2.1)] Suppose that $(a,0)+(c,c_{n+1})\in\N\A_w$. Let $\delta:=\sum_{i=1}^nl_iw_i$. Thus $(a,\delta)=\sum_{i=1}^nl_i(a_i,w_i)\in\N\A_w$. We obtain
$$(a,\delta)+(c,c_{n+1})=(a,0)+(c,c_{n+1})+\delta(0,\ldots,0,1)\in\N\A_w.$$
\item[(2.2)] Suppose that $(a,0)+(c,c_{n+1})\notin\N\A_w$. In particular, there is no $\beta_{n+1}\in\N$ such that 
$$(a,0)+(c,c_{n+1})=\sum_{i=1}^n\beta_i(a_i,w_i)+\beta_{n+1}(0,\ldots,0,1).$$
So there is no $\beta_{n+1}\in\N$ such that $c_{n+1}=\sum_{i=1}^n\beta_iw_i+\beta_{n+1}$. Thus, $c_{n+1}<\sum_{i=1}^n\beta_iw_i$. Let $\delta:=\max\{\sum_{i=1}^n\beta_iw_i-c_{n+1},\sum_{i=1}^nl_iw_i\}$. Now notice that 
\begin{align}
(a,\delta)&=(a,\sum_{i=1}^nl_iw_i)+(\delta-\sum_{i=1}^nl_iw_i)(0,\ldots,0,1)\notag\\
&=\sum_{i=1}^nl_i(a_i,w_i)+(\delta-\sum_{i=1}^nl_iw_i)(0,\ldots,0,1)\in\N\A_w.\notag
\end{align}
Similarly,
\begin{align}
(a,\delta)+(c,c_{n+1})&=(a+c,\delta+c_{n+1})\notag\\
&=(a+c,\sum_{i=1}^n\beta_iw_i)\notag\\
&+\Big(\delta-(\sum_{i=1}^n\beta_iw_i-c_{n+1})\Big)(0,\ldots,0,1)\notag\\
&=\sum_{i=1}^n\beta_i(a_i,w_i)\notag\\
&+\Big(\delta-(\sum_{i=1}^n\beta_iw_i-c_{n+1})\Big)(0,\ldots,0,1)\in\N\A_w.\notag
\end{align}
\end{itemize}
\end{proof}

\begin{teo}\label{approx}
Let $\A=\{a_1,\ldots,a_n\}\subset\Z^d$. Assume that there exists an element $a\in\N\A$ such that $a+\sat\subset\N\A$. Then there exists $\delta\in\N$ such that $(a,\delta)\in\N\A_w$ and $(a,\delta)+\stw\subset\N\A_w$.
\end{teo}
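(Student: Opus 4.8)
The plan is to reduce the statement to Lemma~\ref{gen} by means of the standard decomposition of the lattice points of a rational cone through a fundamental parallelepiped. First I would observe that $C_w$, being the image of the compact cube $[0,1]^{n+1}$ under a linear map, is bounded, so $C_w\cap\Z^{d+1}$ is a \emph{finite} (and nonempty) set. Applying Lemma~\ref{gen} to each of its elements $(c,c_{n+1})$ produces some $\delta_{(c,c_{n+1})}\in\N$ with $(a,\delta_{(c,c_{n+1})})\in\N\A_w$ and $(a,\delta_{(c,c_{n+1})})+(c,c_{n+1})\in\N\A_w$. Since adding multiples of $(0,\ldots,0,1)$ keeps an element inside $\N\A_w$, both memberships persist after enlarging $\delta_{(c,c_{n+1})}$; hence I would set
\[
\delta:=\max\bigl\{\delta_{(c,c_{n+1})}\;:\;(c,c_{n+1})\in C_w\cap\Z^{d+1}\bigr\}.
\]
This single $\delta\in\N$ then satisfies $(a,\delta)\in\N\A_w$ and $(a,\delta)+(c,c_{n+1})\in\N\A_w$ for \emph{every} $(c,c_{n+1})\in C_w\cap\Z^{d+1}$.

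The second ingredient is the inclusion
\[
\stw\;\subseteq\;\N\A_w+\bigl(C_w\cap\Z^{d+1}\bigr).
\]
Writing $v_i:=(a_i,w_i)$ for $1\le i\le n$ and $v_{n+1}:=(0,\ldots,0,1)$, so that $\A_w=\{v_1,\ldots,v_{n+1}\}$ and $\stw=\R_{\geq0}\A_w\cap\Z^{d+1}$, I would prove this by taking $x\in\stw$, writing $x=\sum_i\alpha_iv_i$ with all $\alpha_i\ge 0$, splitting $\alpha_i=\lfloor\alpha_i\rfloor+\beta_i$ with $\beta_i\in[0,1]$, and noting that $p:=\sum_i\beta_iv_i=x-\sum_i\lfloor\alpha_i\rfloor v_i$ is an integer vector that lies in $C_w$, while $\sum_i\lfloor\alpha_i\rfloor v_i\in\N\A_w$.

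Finally, combining the two ingredients: for an arbitrary $x\in\stw$ I would write $x=y+(c,c_{n+1})$ with $y\in\N\A_w$ and $(c,c_{n+1})\in C_w\cap\Z^{d+1}$, so that
\[
(a,\delta)+x=\bigl((a,\delta)+(c,c_{n+1})\bigr)+y\in\N\A_w
\]
by the choice of $\delta$. Together with $(a,\delta)\in\N\A_w$, this yields exactly $(a,\delta)+\stw\subset\N\A_w$. The one step requiring a little care is the uniformization of the shift $\delta$ over the finitely many residue classes $(c,c_{n+1})$; the substance of the argument is carried by Lemma~\ref{gen}, and the remainder is routine bookkeeping with the cone and the lattice.
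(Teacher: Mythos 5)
Your proposal is correct and follows essentially the same route as the paper: apply Lemma~\ref{gen} to the finitely many lattice points of $C_w$, take $\delta$ to be the maximum of the resulting shifts (using that adding $(0,\ldots,0,1)$ preserves membership in $\N\A_w$), and decompose an arbitrary element of $\stw$ via integer and fractional parts of the coefficients into an element of $\N\A_w$ plus a lattice point of $C_w$. The only difference is cosmetic: you isolate the inclusion $\stw\subseteq\N\A_w+(C_w\cap\Z^{d+1})$ as a separate step, whereas the paper performs the same floor/fractional splitting inline.
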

\begin{proof}
Consider the notation of Lemma \ref{gen}. Let $\{c_1,\ldots,c_m\}=C_w\cap\Z^{d+1}$. By the lemma, there exists $\delta_i\in\N$ such that $(a,\delta_i)\in\N\A_w$ and $(a,\delta_i)+c_i\in\N\A_w$. Let $\delta:=\max_{i}\{\delta_i\}$. Let $v\in\stw$. Since $\stw=\R_{\geq0}\A_w\cap\Z^{d+1}$, we can write $v=\sum_{i=1}^nr_i(a_i,w_i)+r_{n+1}(0,\ldots,0,1)$, where $r_i\in\R_{\geq0}$. Thus,
\begin{align}
(a,\delta)+v&=(a,\delta)+\sum_{i=1}^nr_i(a_i,w_i)+r_{n+1}(0,\ldots,0,1)\notag\\
&=(a,\delta)+\sum_{i=1}^n\lfloor r_i \rfloor (a_i,w_i)+\lfloor r_{n+1} \rfloor (0,\ldots,0,1)\notag\\
&+\sum_{i=1}^n(r_i-\lfloor r_i \rfloor)(a_i,w_i)+(r_{n+1}-\lfloor r_{n+1} \rfloor)(0,\ldots,0,1).\notag
\end{align}
Notice that $\sum_{i=1}^n\lfloor r_i \rfloor (a_i,w_i)+\lfloor r_{n+1} \rfloor (0,\ldots,0,1)\in\N\A_w$ and $\sum_{i=1}^n(r_i-\lfloor r_i \rfloor)(a_i,w_i)+(r_{n+1}-\lfloor r_{n+1} \rfloor)(0,\ldots,0,1)\in C_w\cap\Z^{d+1}$. Assume that this last element is $c_1$. Then 
$$(a,\delta)+c_1=(\delta-\delta_1)(0,\ldots,0,1)+(a,\delta_1)+c_1\in\N\A_w.$$
We conclude that
$$(a,\delta)+v=\sum_{i=1}^n\lfloor r_i \rfloor (a_i,w_i)+\lfloor r_{n+1} \rfloor (0,\ldots,0,1)+(a,\delta)+c_1\in\N\A_w.$$
\end{proof}


\section{Saturation}\label{sat}

In this section we show that saturation of affine semigroups is not preserved by Gr\"obner degeneration. Recall that an affine semigroup $S$ is saturated if $S=\overline{S}$.

Let $m\in\N$. Consider the set
$$\A(m):=\{(1,0),(1,1),(m,m+1)\}\subset\Z^2.$$ 
It is well known that $\N\A(m)$ is a saturated semigroup. Let $w=(1,1,1)$. We show that $\N\A(m)_w$ is saturated if and only if $m\leq2$. We use this notation throughout this section.

\begin{pro}
The semigroup $\N\A(1)_w$ is saturated.
\end{pro}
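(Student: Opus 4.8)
The plan is to use Theorem~\ref{main} to make the generators explicit, observe that the cone they span is simplicial, describe its lattice points by inequalities, and then write down an explicit nonnegative integral representation for each lattice point.

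By Theorem~\ref{main}, $\N\A(1)_w$ is the semigroup generated by $v_1=(1,0,1)$, $v_2=(1,1,1)$, $v_3=(1,2,1)$ and $v_4=(0,0,1)$ in $\Z^3$. The geometric key is that $v_2=\tfrac12 v_1+\tfrac12 v_3$, so $v_2$ lies on the edge of the cone $\sigma:=\R_{\geq0}\A(1)_w$ joining $v_1$ and $v_3$; hence $\sigma=\R_{\geq0}\{v_1,v_3,v_4\}$ is a simplicial cone, but not a smooth one --- its index is $2$, so $\N\{v_1,v_3,v_4\}$ alone is \emph{not} saturated. Solving the linear system $\alpha_1v_1+\alpha_3v_3+\alpha_4v_4=(p,q,r)$ with $\alpha_i\in\R_{\geq0}$ then gives
$$\overline{\N\A(1)_w}=\sigma\cap\Z^3=\{(p,q,r)\in\Z^3\mid q\geq0,\ 2p\geq q,\ r\geq p\}.$$

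It remains to show that every such lattice point already lies in $\N\A(1)_w$, i.e. that the extra generator $v_2$ exactly compensates for the non-smoothness of $\sigma$ (the reverse inclusion $\N\A(1)_w\subseteq\sigma\cap\Z^3$ being trivial). Given $(p,q,r)$ satisfying the three inequalities, I would set $e:=r-p$, $c:=\max\{0,q-p\}$, $b:=q-2c$, $a:=p-q+c$; a direct computation shows $av_1+bv_2+cv_3+ev_4=(p,q,r)$, so the only thing left is to verify $a,b,c,e\in\N$. This is a short case analysis: $e\geq0$ is the inequality $r\geq p$; if $q\leq p$ then $c=0$, $b=q\geq0$ and $a=p-q\geq0$; if $q>p$ then $c=q-p>0$, $a=0$ and $b=2p-q\geq0$, which is precisely the inequality $2p\geq q$ cutting out the ``bad'' facet of $\sigma$. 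The main (and only delicate) point is thus this last identity: the extra generator $v_2$ falls exactly where it is needed to make the index-$2$ simplicial cone generate a saturated semigroup, and everything else is routine bookkeeping.
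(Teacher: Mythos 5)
Your proposal is correct: the inequality description $q\geq0$, $2p\geq q$, $r\geq p$ of the cone $\R_{\geq0}\A(1)_w$ is right, the coefficients $e=r-p$, $c=\max\{0,q-p\}$, $b=q-2c$, $a=p-q+c$ do satisfy $av_1+bv_2+cv_3+ev_4=(p,q,r)$, and your case analysis establishes their nonnegativity, so $\R_{\geq0}\A(1)_w\cap\Z^3\subseteq\N\A(1)_w$ as required. The route differs from the paper's in a mild but genuine way. The paper starts from a lattice point $(a,b,c)$ of the cone presented via an integer multiple lying in the semigroup, peels off the last coordinate using $c\geq a$ to reduce to $(a,b,a)$, and then simply quotes the well-known saturation of the planar semigroup $\N\A(1)=\N\{(1,0),(1,1),(1,2)\}$ to represent $(a,b)$ and lift the representation. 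You instead work with the facet description of the three-dimensional cone (noting it is simplicial of index $2$ with $v_2$ on an edge) and give fully explicit integer coefficients; in effect you re-prove the planar saturation inline rather than citing it. Your version is self-contained and makes transparent exactly where the extra generator $v_2$ is needed, at the cost of a slightly longer computation; the paper's version is shorter because it delegates the planar step to a known fact, and its "multiple in the semigroup" formulation of saturation avoids writing down the facet inequalities. The common skeleton --- handle the $t$-direction via $(0,0,1)$, then represent the projection to $\Z^2$ --- is the same in both arguments.
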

\begin{proof}
Let $(a,b,c)\in\N^3$ be such that $\lambda(a,b,c)\in\N\A(1)_w$, for some $\lambda\geq1$. We want to show that $(a,b,c)\in\N\A(1)_w$. There exist $\alpha_i\in\N$ such that
\begin{align}
\lambda a&=\alpha_1+\alpha_2+\alpha_3,\notag\\
\lambda b&=\alpha_2+2\alpha_3,\notag\\
\lambda c&=\alpha_1+\alpha_2+\alpha_3+\alpha_4=\lambda a+\alpha_4.\notag
\end{align}
By the third equation, it follows that $c\geq a$. Let $c=a+d$, for some $d\geq0$. Then $(a,b,c)=(a,b,a)+d(0,0,1)$. Thus, we need to show that $(a,b,a)\in\N\A(1)_w$. Since $\lambda(a,b,c)\in\N\A(1)_w$, it follows that $\lambda(a,b)\in\N\A(1)$. Since this semigroup is saturated, $(a,b)\in\N\A(1)$, i.e., 
$$(a,b)=\beta_1(1,0)+\beta_2(1,1)+\beta_3(1,2).$$
We conclude that $(a,b,a)=\beta_1(1,0,1)+\beta_2(1,1,1)+\beta_3(1,2,1)\in\N\A(1)_w$.
\end{proof}

\begin{pro}
The semigroup $\N\A(2)_w$ is saturated.
\end{pro}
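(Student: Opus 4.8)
The plan is to run the same scheme as in the proof for $\N\A(1)_w$ — project to the first two coordinates, invoke saturation of $\N\A(2)$, and then control the last coordinate — but the resulting bound on the last coordinate is now weaker and must be sharpened by a floor computation. Concretely, I would take $(a,b,c)\in\N^3$ with $\lambda(a,b,c)\in\N\A(2)_w$ for some $\lambda\geq1$ and aim to show $(a,b,c)\in\N\A(2)_w$. Since $\A(2)=\{(1,0),(1,1),(2,3)\}$ and $w=(1,1,1)$, membership of $\lambda(a,b,c)$ means there are $\alpha_1,\alpha_2,\alpha_3,\alpha_4\in\N$ with
\begin{align}
\lambda a&=\alpha_1+\alpha_2+2\alpha_3,\notag\\
\lambda b&=\alpha_2+3\alpha_3,\notag\\
\lambda c&=\alpha_1+\alpha_2+\alpha_3+\alpha_4.\notag
\end{align}
From the first two equations $\lambda(a,b)\in\N\A(2)$, so since this semigroup is saturated, $(a,b)\in\N\A(2)$; moreover $(a,b)$ lies in the cone $\Rz\A(2)$ (which is stable under positive scaling), and one checks this cone is $\{(a,b):b\geq0,\ 3a\geq2b\}$.

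The key step is an explicit description of $\N\A(2)_w$. Because $(0,\ldots,0,1)$ is one of the generators of $\A(2)_w$ and the last coordinate of each of $(1,0,1),(1,1,1),(2,3,1)$ equals $1$, a triple $(a',b',c')$ lies in $\N\A(2)_w$ if and only if $(a',b')\in\N\A(2)$ and $c'$ is at least the minimum of $\beta_1+\beta_2+\beta_3$ over all $(\beta_1,\beta_2,\beta_3)\in\N^3$ representing $(a',b')$ through $(1,0),(1,1),(2,3)$. Solving that linear system gives $\beta_2=b'-3\beta_3$ and $\beta_1=a'-b'+\beta_3$, so $\beta_1+\beta_2+\beta_3=a'-\beta_3$; the largest admissible $\beta_3$ is $\lfloor b'/3\rfloor$, and this value is admissible precisely because $3a'\geq2b'$ forces $\beta_1=a'-b'+\lfloor b'/3\rfloor\geq0$. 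Hence the criterion reads: $(a',b',c')\in\N\A(2)_w$ if and only if $(a',b')\in\N\A(2)$ and $c'\geq a'-\lfloor b'/3\rfloor$.

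It then remains to combine the two observations. Applying the criterion to $\lambda(a,b,c)$ gives $\lambda c\geq\lambda a-\lfloor\lambda b/3\rfloor\geq\lambda a-\lambda b/3$, hence $c\geq a-b/3$; since $c\in\Z$ this upgrades to $c\geq\lceil a-b/3\rceil=a-\lfloor b/3\rfloor$. Together with $(a,b)\in\N\A(2)$ from the first paragraph and the criterion applied once more, this yields $(a,b,c)\in\N\A(2)_w$, as desired.

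I expect the main obstacle to be the middle step: establishing the explicit description of $\N\A(2)_w$, and in particular checking that the extremal choice $\beta_3=\lfloor b'/3\rfloor$ is feasible — which is exactly where $(a',b')$ being in the cone $\Rz\A(2)$ enters — together with the elementary identity $\lceil a-b/3\rceil=a-\lfloor b/3\rfloor$. Everything else is bookkeeping. (The same computation for $\A(m)$ yields $c\geq a-\tfrac{m-1}{m+1}b$ and then asks for $\lceil a-\tfrac{m-1}{m+1}b\rceil\geq a-(m-1)\lfloor b/(m+1)\rfloor$, an inequality that begins to fail for $m\geq3$; this is what pins down $m\leq2$ as the exact range in this section.)
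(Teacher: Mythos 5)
Your proof is correct, but it follows a genuinely different route from the paper. The paper proves saturation by brute force: it reduces to showing $A_1\cap\Z^3\subset\N\A(2)_w$ for the fundamental parallelotope $A_1$ spanned by the four generators, and then enumerates the finitely many lattice points $(x,y,z)$ case by case in $z$. You instead derive a closed-form membership criterion: since the last generator is $(0,0,1)$ and the other three have last coordinate $1$, a point $(a',b',c')$ lies in $\N\A(2)_w$ exactly when $(a',b')\in\N\A(2)$ and $c'\geq a'-\lfloor b'/3\rfloor$, the right-hand side being the minimal total $\beta_1+\beta_2+\beta_3$ over representations of $(a',b')$; the feasibility of the extremal choice $\beta_3=\lfloor b'/3\rfloor$ indeed follows from the cone inequality $3a'\geq 2b'$ together with integrality of $a'$, and your rounding step $\lambda c\geq\lambda a-\lfloor\lambda b/3\rfloor$, hence $c\geq\lceil a-b/3\rceil=a-\lfloor b/3\rfloor$, is valid. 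Your opening reduction (an integer point with some multiple $\lambda\geq1$ in the semigroup must itself lie in the semigroup) is the same characterization of saturation the paper uses in its proof for $\N\A(1)_w$, so nothing nonstandard is being smuggled in. What your approach buys is uniformity and insight: it avoids any enumeration, it isolates exactly where integrality saves the day for $m=2$ (the identity $\lceil a-b/3\rceil=a-\lfloor b/3\rfloor$), and the same computation for general $m$ explains why saturation fails precisely for $m\geq 3$, matching the counterexamples in the rest of the section; what the paper's enumeration buys is that it needs no structural claim about $\N\A(2)_w$ beyond checking a finite list.
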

\begin{proof}
Let $b_1=(1,0,1)$, $b_2=(1,1,1)$, $b_3=(2,3,1)$, and $b_4=(0,0,1)$. By definition, $\A(2)_w=\{b_1,b_2,b_3,b_4\}$. To show that $\N\A(2)_w$ is saturated we prove that $\N\A(2)_w=\R_{\geq0}(\A(2)_w)\cap\Z^3$. Clearly, $\N\A(2)_w\subset\R_{\geq0}(\A(2)_w)\cap\Z^3$. For the other inclusion it is enough to show that $A_1\cap\Z^3\subset\N\A(2)_w$, where $A_1=\{\sum_i\lambda_ib_i|\lambda_i\in\R,0\leq\lambda_i\leq1\}$.

Let $(x,y,z)\in A_1\cap\Z^3$. Then there exist $\lambda_i\in[0,1]$ such that
\begin{align}
x&=\lambda_1+\lambda_2+2\lambda_3,\notag\\
y&=\lambda_2+3\lambda_3,\label{d}\\
z&=\lambda_1+\lambda_2+\lambda_3+\lambda_4.\notag
\end{align}
By the third equation it follows that $z\leq4$. 
\begin{itemize}
\item $z=4\Rightarrow\lambda_1=\lambda_2=\lambda_3=\lambda_4=1\Rightarrow (x,y,z)=(4,4,4)\in\N\A(2)_w$.
\item $z=0\Rightarrow \lambda_1=\lambda_2=\lambda_3=\lambda_4=0\Rightarrow (x,y,z)=(0,0,0)\in\N\A(2)_w$.
\end{itemize}
By doing elementary operations on the equations in (\ref{d}) we obtain:
\begin{align}
\lambda_3&=x-z+\lambda_4,\label{a}\\
\lambda_2+3\lambda_4&=-3x+y+3z,\label{b}\\
\lambda_1+\lambda_2+2\lambda_4&=2z-x.\label{c}
\end{align}
On the other hand, since $(x,y,z)\in A_1\cap\Z^3$ it follows that 
\begin{align}
(x,y)\in&\{\lambda_1(1,0)+\lambda_2(1,1)+\lambda_3(2,3)|\lambda_i\in[0,1]\}\cap\Z^2\notag\\
&=\{(0,0),(1,0),(1,1),(2,1),(2,2),(2,3),(3,3),(3,4),(4,4)\}.\notag
\end{align}
Now we are ready to study the remaining values of $z$.
\begin{itemize}
\item $z=1$. Of the nine options for $(x,y,z)$ only $(0,0,1)$, $(1,0,1)$, $(1,1,1)$, $(2,3,1)$ are in $A_1$ (for the other cases $-3x+y+3z<0$, contradicting (\ref{b})). These four elements belong to $\N\A(2)_w$. 
\item $z=2$. If $(0,0,2)\in A_1$ then, by equation (\ref{a}), $\lambda_4=\lambda_3+2\geq2$, which is a contradiction. Thus $(0,0,2)\notin A_1$. Similarly, $(4,4,2)\notin A_1$. On the other hand, $(2,2,2)=\frac{1}{3}(1,0,1)+(1,1,1)+\frac{1}{3}(2,3,1)+\frac{1}{3}(0,0,1)\in A_1$ and $(2,2,2)=2(1,1,1)\in\N\A(2)_w$. The other six options for $(x,y,z)$ are the sum of two different elements of $\A(2)_w$. Therefore they belong to $A_1$ and also to $\N\A(2)_w$.
\item $z=3$. If $(0,0,3)\in A_1$ then, by equation (\ref{a}), $\lambda_4=\lambda_3+3\geq3$, which is a contradiction. Thus, $(0,0,3)\notin A_1$. Similarly, $(1,0,3),(1,1,3)\notin A_1$. If $(2,2,3)\in A_1$, again by (\ref{a}),  $0\leq\lambda_4=\lambda_3+1\leq1$ implying $\lambda_3=0$ and $\lambda_4=1$. Thus, by (\ref{b}), $\lambda_2=5-3=2$, which is a contradiction. Thus, $(2,2,3)\notin A_1$. Similarly, $(2,3,3)\notin A_1$. The remaining four options for $(x,y,z)$ are the sum of three different elements of $\A(2)_w$. Therefore they belong to $A_1$ and also to $\N\A(2)_w$.
\end{itemize}
We conclude that $A_1\cap\Z^3\subset\N\A(2)_w$ and so $\R_{\geq0}(\A(2)_w)\cap\Z^3\subset\N\A(2)_w$, implying that $\N\A(2)_w$ is saturated.
\end{proof}

\begin{pro}
The semigroup $\N\A(m)_w$ is not saturated for all $m\geq3$.
\end{pro}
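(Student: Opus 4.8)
The plan is to produce a single lattice point lying in $\overline{\N\A(m)_w}$ but not in $\N\A(m)_w$; since $\N\A(m)$ itself is saturated, this shows that Gr\"obner degeneration can destroy saturation. By Theorem \ref{main} we have $\A(m)_w=\{b_1,b_2,b_3,b_4\}$ with $b_1=(1,0,1)$, $b_2=(1,1,1)$, $b_3=(m,m+1,1)$ and $b_4=(0,0,1)$. The witness I would use is $v=(2,2,1)$, and it works simultaneously for all $m\ge3$.

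First I would verify that $v\in\overline{\N\A(m)_w}=\Rz\A(m)_w\cap\Z^3$. As $v\in\Z^3$, it is enough to exhibit $v$ as a nonnegative real combination of the $b_i$. A direct computation gives
$$v=\tfrac{1}{m-1}\,b_1+\tfrac{m-3}{m-1}\,b_2+\tfrac{1}{m-1}\,b_3,$$
whose coefficients are nonnegative exactly when $m\ge3$; for $m=3$ this is simply $\tfrac12 b_1+\tfrac12 b_3$. Checking the three coordinates (and noting that the coefficients sum to $1$, which accounts for the last coordinate) confirms the identity.

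Next I would show $v\notin\N\A(m)_w$. Every generator of $\A(m)_w$ has last coordinate equal to $1$, so if $v=\sum_{i=1}^4\alpha_i b_i$ with $\alpha_i\in\N$, then reading off the third coordinate forces $\alpha_1+\alpha_2+\alpha_3+\alpha_4=1$. Hence exactly one $\alpha_i$ equals $1$ and the rest vanish, so $v$ would have to be one of $b_1,b_2,b_3,b_4$. But $b_1,b_2,b_4$ are never $(2,2,1)$, and $b_3=(m,m+1,1)\ne(2,2,1)$ because $m\ge3$. This contradiction gives $v\notin\N\A(m)_w$, so $\N\A(m)_w$ is not saturated.

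I do not expect a real obstacle here. The only thing requiring a little thought is to guess the witness $v=(2,2,1)$ together with the explicit conic combination above; the threshold $m\ge3$ is precisely the condition that the coefficient $\tfrac{m-3}{m-1}$ of $b_2$ be nonnegative, which is the structural reason the cases $m=1,2$ behaved differently in the preceding propositions. The non-membership step is then immediate from the observation that the last coordinate of each generator of $\A(m)_w$ equals $1$.
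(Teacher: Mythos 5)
Your proof is correct and follows essentially the same route as the paper: both use the witness $(2,2,1)$, showing it lies in $\Rz\A(m)_w\cap\Z^3$ but not in $\N\A(m)_w$, the latter via the observation that every generator has last coordinate $1$. The only difference is minor: the paper splits into $m$ odd/even and exhibits $(r+1)(2,2,1)$ as an $\N$-combination of the generators, whereas your single conic combination (equivalently, after clearing denominators, $(m-1)(2,2,1)=(1,0,1)+(m-3)(1,1,1)+(m,m+1,1)$) handles all $m\geq3$ uniformly and avoids the parity case distinction.
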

\begin{proof}
By definition, $\A(m)_w=\{(1,0,1),(1,1,1),(m,m+1,1),(0,0,1)\}$. Assume first that $m=2r+1$ for some $r\geq1$. Let $q=r-1\geq0$ and notice:
$$(m+1,m+1,r+1)=(1,0,1)+(m,m+1,1)+q(0,0,1)\in\N\A(m)_w.$$
On the other hand, $(2,2,1)\notin\N\A(m)_w$. Since $(m+1,m+1,r+1)=(r+1)(2,2,1)$ we conclude that $\N\A(m)_w$ is not saturated in this case.

Now assume that $m=2r$ for some $r\geq2$. Let $q=r-2\geq0$. Then,
$$(m+2,m+2,r+1)=(1,0,1)+(1,1,1)+(m,m+1,1)+q(0,0,1)\in\N\A(m)_w.$$
As before, $(m+2,m+2,r+1)=(r+1)(2,2,1)$ implying that $\N\A(m)_w$ is not saturated in this case.
\end{proof}


\section{Betti elements}\label{sect Betti}

As in previous sections, let $S\subset\Z^d$ be an affine semigroup generated by $\A=\{a_1,\ldots,a_n\}$. Given $w\in \N^n$, denote $S_w:=\N\A_w$. In this section we study the behaviour of Betti elements of affine semigroups under Gr\"obner degenerations. We assume that $S$ is pointed, that is, $S\cap(-S)=\{0\}$. We first recall the basic definitions we need.

Let $\sim_{\A}$ denote the kernel congruence of $\pi_{\A}$, i.e., $\alpha\sim_{\A}\beta$ if $\pi_{\A}(\alpha)=\pi_{\A}(\beta)$. It is well-known that $\sim_{\A}$ is finitely generated. A \textit{presentation} $\rho\subset \N^n\times\N^n$ for $S$ is a system of generators of $\sA$. A \textit{minimal presentation} for $S$ is a minimal system of generators of $\sA$. Notice that this is equivalent to ask for a minimal set of binomial generators of the corresponding toric ideal. 

\begin{defi}
Let $\Bet(S):=\{\pi_{\A}(\alpha)|(\alpha,\beta)\in\rho\}$, where $\rho\subset \N^n\times\N^n$ is any minimal presentation of $S$. The set $\Bet(S)$ does not depend on $\rho$ \cite[Chapter 9]{GR1}. It is called the set of Betti elements of $S$. 
\end{defi}

The terminology in the previous definition comes from the Betti numbers of $\ia$. Recall that $\ia$ is $S$-graded, where ${\rm deg}_{S}(x_i)=\pi_{\A}(e_i)$, and $e_i$ is the $i$-th element of the canonical basis of $\N^n$. The first Betti number of degree $a\in S$ of $\pls:=\plx/\ia$, denoted by $\beta_{1,a}(\pls)$ is the number of minimal generators of degree $a$ of $\ia$. It is well known that it does not depend on the minimal set of generators of $\ia$. The first Betti number of $\pls$ is the cardinality of a minimal set of generators of $\ia$, denoted by  $\beta_{1}(\pls)$, so $\beta_{1}(\pls)=\sum_{a\in S}\beta_{1,a}(\pls)$.  

\begin{rem}\label{betti-el-betti-num}
In view of the previous paragraph, for $a\in \N^{d}$, $a\in \Bet(S)$ if and only if $\beta_{1,a}(\pls)\neq 0$.
\end{rem}

Our first result relates the Betti elements of $S$ with those of $S_w$. First we prove a simple lemma.

\begin{lem}\label{min w}
Let $w\in\N^ n$. Let $\{g_1,\ldots,g_s\}\subset\ia$ be a generating set of $\ia$. Assume that $\{(g_1)_t,\ldots,(g_r)_t\}$ generates $\iaw$, for some $r\leq s$. Then $\{g_1,\ldots,g_r\}$ generates $\ia$.
\end{lem}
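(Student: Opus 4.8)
The plan is to undo the Gr\"obner degeneration by specializing $t=1$. Let $\varepsilon\colon\pol[t]\to\pol$ be the surjective $\K$-algebra homomorphism fixing each $x_i$ and sending $t\mapsto1$. For any $f=\sum_u c_ux^u\in\pol$, directly from the definition of $f_t$ we get
$$\varepsilon(f_t)=\varepsilon\!\left(t^{d(f)}f(t^{-w_1}x_1,\ldots,t^{-w_n}x_n)\right)=f,$$
because every power of $t$ becomes $1$ under $\varepsilon$.

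First I would record that $\varepsilon\big((\ia)_t\big)=\ia$. By definition $(\ia)_t$ is generated, as an ideal of $\pol[t]$, by the set $\{f_t\mid f\in\ia\}$; applying the surjection $\varepsilon$ carries this set to $\{f\mid f\in\ia\}$, a generating set of $\ia$. Here I use only the elementary fact that a surjective ring homomorphism sends a generating set of an ideal onto a generating set of the image ideal.

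Then I would finish as follows. By Theorem \ref{main}, $\iaw=(\ia)_t$, so the hypothesis says that $\{(g_1)_t,\ldots,(g_r)_t\}$ generates $(\ia)_t$ as an ideal of $\pol[t]$. Applying $\varepsilon$ to this generating set, and using $\varepsilon((g_i)_t)=g_i$ together with $\varepsilon\big((\ia)_t\big)=\ia$ from the previous step, we conclude that $\{g_1,\ldots,g_r\}$ generates $\ia$, as desired.

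There is essentially no obstacle here: the two ingredients are the identity $\varepsilon(f_t)=f$, which is immediate from the definition, and the stability of ideal generation under surjections, which is standard commutative algebra. I note that the stated hypothesis is slightly more than what is used — one does not really need $g_{r+1},\ldots,g_s$ to exist, only that each $g_i$ (for $i\le r$) is a polynomial whose degeneration $(g_i)_t$ appears in a generating set of $\iaw$ — but phrasing the lemma with a full generating set $\{g_1,\ldots,g_s\}$ matches the way it will be applied, namely with a Gr\"obner basis via Remark \ref{gente}.
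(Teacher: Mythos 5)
Your proof is correct, and its engine is the same as the paper's: specialize $t=1$. The paper's own argument is local — for each $j>r$ it writes $(g_j)_t=\sum_{i=1}^r h_i(x,t)(g_i)_t$ in $\iaw$ and sets $t=1$ to see that $g_j\in\langle g_1,\ldots,g_r\rangle$, then uses the hypothesis that $\{g_1,\ldots,g_s\}$ generates $\ia$ to conclude. You instead globalize the substitution through the evaluation map $\varepsilon\colon\pol[t]\to\pol$, $t\mapsto1$: since $\varepsilon(f_t)=f$ and surjections carry generating sets of ideals to generating sets of image ideals, $\varepsilon\big((\ia)_t\big)=\ia$, and pushing the hypothesized generators of $\iaw=(\ia)_t$ forward gives the claim. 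The trade-off is mild but real: your version never uses that $g_1,\ldots,g_s$ generate $\ia$ (so it proves a slightly stronger statement, as you note), but it leans explicitly on Theorem \ref{main} to identify $\iaw$ with $(\ia)_t$; the paper's element-by-element version also needs that identification tacitly (to know $(g_j)_t\in\iaw$ lies in the ideal generated by $(g_1)_t,\ldots,(g_r)_t$), so neither route avoids it, and in the paper's applications the $g_i$ come from a Gr\"obner basis via Remark \ref{gente}, where this is automatic.
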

\begin{proof}
If $r=s$, there is nothing to prove. Suppose $r<s$ and let $j\in\{r+1,\ldots,s\}$. By hypothesis, $(g_j)_t=\sum_{i=1}^r h_{i}(x,t)(g_i)_t.$ Making $t=1$ it follows $g_j=\sum_{i=1}^r h_{i}(x,1)g_i$.
\end{proof}

\begin{teo}\label{inclusion}
Let $w\in\N^n$. Then, for each $b\in\Bet(S)$ there exists $\lambda\in\N$ such that $(b,\lambda)\in\Bet(S_w)$.
\end{teo}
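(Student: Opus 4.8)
The plan is to use the characterization of Betti elements through first Betti numbers (Remark \ref{betti-el-betti-num}) together with the explicit description of $\iaw$ coming from Theorem \ref{main} and the lifting provided by Remark \ref{gente}. Fix a minimal generating set $g_1 = x^{u_1}-x^{v_1},\ldots,g_s = x^{u_s}-x^{v_s}$ of $\ia$ consisting of binomials, where we may assume it arises from a reduced Gröbner basis with respect to a monomial order refining $w$ (every binomial ideal has such a minimal generating set inside a Gröbner basis; or one argues that a minimal generating set can always be chosen to sit inside the chosen Gröbner basis). By Remark \ref{gente}, $(g_1)_t,\ldots,(g_s)_t$ generate $\iaw$. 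The key structural point is to show that this lifted set is \emph{minimal}: if it were not, Lemma \ref{min w} would produce a proper subset of $\{g_1,\ldots,g_s\}$ generating $\ia$, contradicting minimality. Hence $\beta_1(\K[S_w]) = s = \beta_1(\K[S])$, and moreover the $S_w$-degrees of the $(g_i)_t$ are exactly the Betti elements of $S_w$.

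Next I would compute those degrees. Fix $b \in \Bet(S)$. By Remark \ref{betti-el-betti-num}, $b = \pi_{\A}(u_i) = \pi_{\A}(v_i)$ for some $i$ with $\beta_{1,b}(\K[S]) \neq 0$; pick $g_i = x^{u_i}-x^{v_i}$ from the minimal set with this degree, ordering so that $x^{u_i} >_w x^{v_i}$. Then $(g_i)_t = x^{u_i} - x^{v_i} t^{w\cdot u_i - w\cdot v_i}$, which, reading off the grading, is a binomial of $S_w$-degree $\piw(u_i,0) = (\pi_{\A}(u_i), w\cdot u_i) = (b, w\cdot u_i)$ — this identity is exactly the computation already carried out in the proof of Theorem \ref{main}. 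Setting $\lambda := w\cdot u_i$, minimality of the lifted generators gives $\beta_{1,(b,\lambda)}(\K[S_w]) \neq 0$, i.e. $(b,\lambda) \in \Bet(S_w)$.

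The one subtle point to be careful about — and the step I expect to be the main obstacle — is the claim that $\{(g_1)_t,\ldots,(g_s)_t\}$ is a \emph{minimal} generating set of $\iaw$. Lemma \ref{min w} only tells us that dropping a lifted generator forces us to be able to drop the original; but to apply it we need to know that a minimal generating set of $\iaw$ can be extracted from the lifted Gröbner basis, i.e. that no element of a minimal generating set of $\iaw$ lies outside the $\K$-span structure spanned by the $(g_i)_t$. This follows because $\{(g_i)_t\}$ already generates $\iaw$, so some subset of it is a minimal generating set; say $(g_1)_t,\ldots,(g_r)_t$ with $r \le s$. By Lemma \ref{min w}, $g_1,\ldots,g_r$ generate $\ia$, and by minimality of $\{g_1,\ldots,g_s\}$ we must have $r = s$. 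Thus the full lifted set is itself minimal. One should also note that distinct minimal presentations of $S$ give the same set $\Bet(S)$ (cited from \cite{GR1}), so the particular choice of minimal generating set inside the Gröbner basis does not affect which element $b$ we started from; and although $\lambda = w\cdot u_i$ may a priori depend on the representative, any valid choice yields a genuine element of $\Bet(S_w)$, which is all the statement asserts.
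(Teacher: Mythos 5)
Your argument breaks at the step where you apply Remark \ref{gente} to the minimal generating set $\{g_1,\ldots,g_s\}$ of $\ia$. That remark only asserts that lifting a \emph{full} Gr\"obner basis of $\ia$ with respect to an order refining $w$ yields generators of $\iaw$; a minimal generating set of $\ia$, even when chosen as a subset of the reduced Gr\"obner basis, need not itself be a Gr\"obner basis, and its lift need not generate $\iaw$. Hence the premise ``since $\{(g_i)_t\}$ already generates $\iaw$'' in your minimality argument is unjustified, and in fact false in general. Concretely, in Example \ref{scroll} the ideal $\ia$ is minimally generated by three binomials sitting inside the Gr\"obner basis $\{b^2-ac,\,bc-ad,\,bd-c^2,\,ad^2-c^3\}$, yet a minimal generating set of $\iaw$ has four elements; so the lift of the three minimal generators does not generate $\iaw$, and your intermediate conclusion $\beta_1(\K[S_w])=\beta_1(\K[S])$ fails there ($4\neq 3$). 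This is precisely why the paper's Theorem \ref{inclusion} only gives an injection-type statement and not a bijection between Betti elements.

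The repair is to run the extraction in the opposite order, which is what the paper does: lift the \emph{entire} Gr\"obner basis $\{g_1,\ldots,g_s\}$, so that Remark \ref{gente} genuinely applies and $\{(g_1)_t,\ldots,(g_s)_t\}$ generates $\iaw$; extract from this lifted set a minimal generating set $\{(g_1)_t,\ldots,(g_r)_t\}$ of $\iaw$ (possible since $S_w$ is pointed, so graded Nakayama applies); and only then invoke Lemma \ref{min w} to conclude that $\{g_1,\ldots,g_r\}$ generates $\ia$ and therefore contains a minimal generating set of $\ia$. Every $b\in\Bet(S)$ is then the $S$-degree of some $g_i=x^{\alpha_i}-x^{\beta_i}$ with $i\leq r$, and since $(g_i)_t$ belongs to a minimal generating set of $\iaw$, its $S_w$-degree $\piw(\alpha_i,0)=(b,\alpha_i\cdot w)$ lies in $\Bet(S_w)$, which is the statement. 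Your degree computation and your remark that $\Bet(S)$ does not depend on the chosen minimal presentation are both fine; it is only the direction of the minimality extraction that must be reversed.
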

\begin{proof}
Let $\{g_1,\ldots,g_s\}\subset \ia$ be a Gr\"obner basis with respect to a refined order $>_w$, where $g_i=x^{\alpha_i}-x^{\beta_i}$ and $\alpha_i\cdot w\geq\beta_i\cdot w$, for each $i$. Then, $\iaw=\langle (g_1)_t,\ldots,(g_s)_t \rangle$ by Remark \ref{gente}. 

After reordering the $g_i$'s if necessary, we may assume that $\{(g_1)_t,\ldots,(g_r)_t\}$ is a minimal generating set, for some $r\leq s$. In particular, $\Bet(S_w)=\{\pi_{\A_w}(\alpha_i,0)=(\pi_{\A}(\alpha_i),\alpha_i\cdot w)\}_{i=1}^r$. On the other hand, by Lemma \ref{min w}, it follows that $\{g_1,\ldots,g_r\}$ generates $\ia$ and so this set contains a minimal generating set. This implies the theorem.
\end{proof}

\begin{rem}
A similar result to Theorem \ref{inclusion} was proved in \cite[Theorem 8.29]{MS}.
\end{rem}

In view of Theorem \ref{inclusion}, several natural questions arise. Is the element $\lambda$ unique? Does every Betti element of $S_w$ have as first coordinate a Betti element of $S$? In the following examples we show that every scenario could actually happen. Example \ref{1-1} shows that $\lambda$ may not be unique. Example \ref{scroll} shows that there are Betti elements of $S_w$ whose first coordinate is not a Betti element of $S$. Finally, Example \ref{un betti w} exhibits an infinite family of numerical semigroups where the map $\Bet(S)\to\Bet(S_w)$, $b\mapsto(b,\lambda)$ is well-defined and  bijective. These examples show that Theorem \ref{inclusion} is the best result we can expect regarding Betti elements of $S$ and $S_w$.

\begin{exam}\label{1-1}
Let $a_1=6, a_2=10, a_3=15$ and $\A=\{a_1,a_2,a_3\}$. Then $\ia$ is minimally generated by $\{x_1^5-x_2^3,x_2^3-x_3^2\}$. Thus, $\Bet(S)=\{30\}$. Let $w=(1,1,1)$. The previous set of generators is also a Gr\"obner basis for $\ia$ with respect to $>_w$, where $>$ is the lexicographical order. Hence, $\iaw$ is generated by $\{x_1^5-x_2^3t^2,x_2^3-x_3^2t\}$. Actually, it is a minimal generating set. In particular, $\Bet(S_w)=\{(30,5),(30,3)\}$.
\end{exam}

\begin{exam}\label{scroll}
Let $\A=\{(1,0),(1,1),(1,2),(1,3)\}\subset\N^2$. Then $\ia$ is minimally generated by $\{ac-b^2,ad-bc,bd-c^2\}$. Thus, $\Bet(S)$ is the set $\{(2,2),(2,3),(2,4)\}$. Let $w=(3,7,2,5)$. A Gr\"obner basis for $\ia$ with respect to $>_{w}$, where $>$ is the lexicographical order, is $\{b^2-ac,bc-ad,bd-c^2,ad^2-c^3\}$. Hence, $\iaw$ is generated by $\{b^2-act^9,bc-adt,bd-c^2t^8,ad^2-c^3t^7\}.$ Actually, it is a minimal generating set. In particular, $\Bet(S_w)=\{(2,2,14),(2,3,9),(2,4,12),(3,6,13)\}$.
\end{exam}

\begin{exam}\label{un betti w}
Let $b_1,\ldots,b_n\geq2$ be pairwise relatively prime integers and $a_i:=\prod_{j\neq i}b_j$. Let $\A=\{a_1,\ldots,a_n\}$ and $S=\N\A\subset\N$. In this case, $\Bet(S)=\{b\}$, where $b=\prod_{j=1}^n b_j$ \cite[Example 12]{GOR}. Let $w=(w_1,\ldots,w_n)\in\N^n$. Consider a permutation $(i_1,\ldots,i_n)$ of $(1,\ldots,n)$ such that $b_{i_1}w_{i_1}\geq b_{i_2}w_{i_2}\geq\cdots\geq b_{i_n}w_{i_n}$.

It is known that $\ia=\langle f_2,\ldots,f_n\rangle$, where $f_i=x_1^{b_1}-x_i^{b_i}$, for each $i\in\{2,\ldots,n\}$ \cite[Example 12]{GOR}. Define
\begin{align}
&g_{i_2}=x_{i_1}^{b_{i_1}}-x_{i_2}^{b_{i_2}}, &&h_{i_2}=x_{i_1}^{b_{i_1}}-x_{i_2}^{b_{i_2}},\notag\\
&g_{i_3}=x_{i_2}^{b_{i_2}}-x_{i_3}^{b_{i_3}},  &&h_{i_3}=x_{i_1}^{b_{i_1}}-x_{i_3}^{b_{i_3}},\notag\\
&\vdots &&\vdots \notag\\
&g_{i_n}=x_{i_{n-1}}^{b_{i_{n-1}}}-x_{i_n}^{b_{i_n}},  &&h_{i_n}=x_{i_1}^{b_{i_1}}-x_{i_n}^{b_{i_n}}.\notag
\end{align}
A direct computation shows that 
$$\langle g_{i_2},\ldots,g_{i_n} \rangle=\langle h_{i_2},\ldots,h_{i_n} \rangle=\langle f_2,\ldots,f_n \rangle.$$
Notice that the binomials $f_i$, $h_{i_j}$, and $g_{i_j}$ are $S$-homogeneous. In addition, $\{f_2,\ldots,f_n\}$ is a minimal generating set of $\ia$. Indeed, assume, for instance, that $f_2$ can be written in terms of $f_3,\ldots,f_n$. Then, evaluating at $(0,1,0,\ldots,0)$ we obtain a contradiction. By the cardinality of $\{g_{i_2},\ldots,g_{i_n}\}$, it follows that this set is also a minimal generating set of $\ia$.

Let $>$ denote the lexicographical order on $\K[x_1,\ldots,x_n]$ with the variables ordered as $x_{i_1}>\cdots>x_{i_n}$. Then, for each $j\in\{2,\ldots,n\}$, we have $\LT_{>_{w}}(g_{i_j})=x_{i_{j-1}}^{b_{i_{j-1}}}$. In particular, for any $1<j<k\leq n$, the leading terms $\LT_{>_{w}}(g_{i_j})$ and $\LT_{>_{w}}(g_{i_k})$ are relatively prime. Hence, $\{g_{i_2},\ldots,g_{i_n}\}$ is a Gr\"obner basis of $\ia$ with respect to $>_{w}$ \cite[Corollary 2.3.4]{HH}. It follows that
$$\iaw=\langle (g_{i_2})_t,\ldots,(g_{i_n})_t \rangle.$$
Notice that each $(g_{i_j})_t=x_{i_{j-1}}^{b_{i_{j-1}}}-x_{i_{j}}^{b_{i_{j}}}t^{w_{i_{j-1}}b_{i_{j-1}}-w_{i_{j}}b_{i_{j}}}$ is $S_w$-homogeneous of degree $(b,b_{i_{j-1}}w_{i_{j-1}})$.

We already proved that $\{g_{i_2},\ldots,g_{i_n}\}$ is a minimal generating set of $\ia$. Therefore, $\{(g_{i_2})_t,\ldots,(g_{i_n})_t\}$ is also a minimal generating set of $\iaw$ (see Remark \ref{gente} and Lemma \ref{min w}). We conclude that $\Bet(S_w)$ equals the set $\{(b,b_{i_{1}}w_{i_{1}}),\ldots,(b,b_{i_{n-1}}w_{i_{n-1}})\}.$ In particular, $|\Bet(S_w)|=1$ if and only if $b_{i_1}w_{i_1}=\cdots=b_{i_{n-1}}w_{i_{n-1}}$. In this case, the map $\Bet(S)\to\Bet(S_w)$, $b\mapsto(b,b_{i_1}w_{i_1})$ is well-defined and bijective.
\end{exam}


\subsection{Semigroups with unique minimal generating set}\label{unique min}

In this section we explore further consequences of Theorem \ref{inclusion} in the context of affine semigroups having a unique minimal generating set. Such semigroups have been studied, for instance, in \cite{CKT,GO,OV}. 

We say that an affine semigroup $S$ is \textit{uniquely presented} if it has a unique minimal presentation. Notice that this is equivalent to the corresponding toric ideal having a unique minimal generating set of binomials, up to scalar multiplication.

\begin{rem}
The notion of \textit{uniquely presented} is not to be confused with the previous notion of \textit{minimally presented}. The former asks for a unique minimal generating set of a toric ideal whereas the latter just asks for a minimal generating set, up to scalar multiplication. 
\end{rem}

We define a partial order $>_S$ on $S$ as follows (recall that we assume $S\cap(-S)=\{0\}$): $\alpha>_S\beta$ if $\alpha-\beta\in S$. We say that $\alpha\in\Bet(S)$ is \textit{Betti minimal} if it is minimal with respect to the order $>_S$. The set of such elements is denoted as $\Betm(S)$.

\begin{rem}\label{betti vs unique}
It is known that $\ia$ is uniquely presented if and only if $\Bet(S)=\Betm(S)$ and the cardinality of $\Bet(S)$ is equal to the cardinality of a minimal generating set of binomials of $\ia$ \cite[Corollary 6]{GO}. By Remark \ref{betti-el-betti-num}, this is also equivalent to $\Bet(S)=\Betm(S)$ and $\beta_{1,a}(\plx/\ia)=1$ for all $a\in \Bet(S)$.
\end{rem}

In the following proposition we show that, for uniquely presented semigroups, the Betti elements of $S_w$ coming from Betti elements of $S$ are Betti-minimal.

Let $S$ be an affine semigroup with Betti elements $\Bet(S)=\{b_1,\ldots,b_r\}$. Let $w\in\N^n$. By Theorem \ref{inclusion}, there exist some $\lambda_i\in\N$ such that $(b_i,\lambda_i)\in\Bet(S_w)$, for each $i\in\{1,\ldots,r\}$.

\begin{teo}\label{betti min}
With the previous notation, assume in addition that $S$ is uniquely presented. Then each $(b_i,\lambda_i)$ is Betti-minimal.
\end{teo}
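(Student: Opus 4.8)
The goal is to show that if $S$ is uniquely presented, then each $(b_i,\lambda_i)\in\Bet(S_w)$ is minimal with respect to the order $>_{S_w}$; that is, there is no $(b_j,\lambda_j)\in\Bet(S_w)$ (or, more to the point, no nonzero element of $S_w$) with $(b_i,\lambda_i)-(b_j,\lambda_j)\in S_w\setminus\{0\}$. By Remark \ref{betti vs unique}, being uniquely presented for $S$ means $\Bet(S)=\Betm(S)$, so the $b_i$ are already minimal in $S$ with respect to $>_S$. The plan is to transfer this minimality up to $S_w$ by projecting onto the first $d$ coordinates.

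First I would set up the projection $p:\Z^{d+1}\to\Z^d$ forgetting the last coordinate, and observe from the defining generators $\A_w=\{(a_1,w_1),\ldots,(a_n,w_n),(0,\ldots,0,1)\}$ that $p(S_w)=S$: every generator of $S_w$ maps either to a generator $a_i$ of $S$ or to $0$. Hence $p$ restricts to a semigroup homomorphism $S_w\to S$, and in particular if $s\in S_w$ then $p(s)\in S$. Next, suppose toward a contradiction that $(b_i,\lambda_i)$ is not Betti-minimal in $S_w$, i.e. $(b_i,\lambda_i)=(b_j,\lambda_j)+s$ for some $(b_j,\lambda_j)\in\Bet(S_w)$ with $s\in S_w\setminus\{0\}$, where I also use Theorem \ref{inclusion} to know that every element of $\Bet(S_w)$ relevant here has a Betti element of $S$ as first coordinate --- actually I only need that the \emph{subtracted} element sits in $S_w$. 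Applying $p$ gives $b_i=b_j+p(s)$ with $p(s)\in S$. If $p(s)\neq 0$ this already contradicts $b_i\in\Betm(S)=\Bet(S)$ (using that $b_i\neq b_j$, which holds because the $b_i$ are distinct and, were $b_i=b_j$, minimality of $b_i$ in $S$ combined with $p(s)\in S$ would force $p(s)=0$). The remaining case is $p(s)=0$: then $s$ lies in the kernel of $p$ intersected with $S_w$, which is exactly $\N\cdot(0,\ldots,0,1)$, so $s=k(0,\ldots,0,1)$ with $k\geq 1$, and $b_i=b_j$, $\lambda_i=\lambda_j+k$.

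So the entire content reduces to ruling out $\lambda_i>\lambda_j$ with $b_i=b_j$ --- i.e. ruling out two Betti degrees of $S_w$ with the same first coordinate differing by a positive multiple of $(0,\ldots,0,1)$. This is where unique presentation of $S$ must be used in full force, via the other half of Remark \ref{betti vs unique}: $\beta_{1,b_i}(\plx/\ia)=1$ for every $i$. The idea is that $\beta_{1,(b_i,\lambda_i)}(\plx[t]/\iaw)$ summed over all possible last coordinates $\lambda$ cannot exceed $\beta_{1,b_i}(\plx/\ia)=1$ (since $\iaw$ is $S$-graded with $\deg_S(t)=0$, as in the Remark following Theorem \ref{inclusion}, and the proof of \cite[Theorem 8.29]{MS} gives $\sum_\lambda \beta_{i,(a,\lambda)}(\plx[t]/\iaw)\le \beta_{i,a}(\plx/\ia)$ --- alternatively one argues directly that a minimal generating binomial of $\iaw$ specializes at $t=1$ to a minimal generating binomial of $\ia$ and these specializations in a fixed $S$-degree must be, up to scalar, the same single binomial). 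Hence at most one value of $\lambda$ can satisfy $(b_i,\lambda)\in\Bet(S_w)$, forcing $\lambda_i=\lambda_j$ and $k=0$, the desired contradiction. The main obstacle is precisely this last step: making rigorous that the fiber of $\Bet(S_w)$ over a fixed Betti element $b_i$ of a uniquely presented $S$ is a single point --- one must be careful that the binomials $(g_{i})_t$ obtained from a Gröbner basis need not a priori be the \emph{minimal} generators, so I would phrase the argument in terms of Betti numbers in a fixed $S$-degree rather than in terms of a particular chosen generating set, and invoke the inequality on graded Betti numbers under the coarsening $S_w \twoheadrightarrow S$.
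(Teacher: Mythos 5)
Your overall route is genuinely different from the paper's (project $S_w\to S$ and transfer minimality, rather than compute fibers), but it has two real gaps. First, in your reduction you take the Betti element of $S_w$ sitting below $(b_i,\lambda_i)$ to be of the form $(b_j,\lambda_j)$ with $b_j\in\Bet(S)$, citing Theorem \ref{inclusion}. That theorem only produces, for each $b\in\Bet(S)$, \emph{some} $(b,\lambda)\in\Bet(S_w)$; it says nothing about the shape of the other elements of $\Bet(S_w)$, and Example \ref{scroll} (where $S$ is uniquely presented) has $(3,6,13)\in\Bet(S_w)$ with $(3,6)\notin\Bet(S)$. So from $(b_i,\lambda_i)=\beta+s$ with $\beta\in\Bet(S_w)$ and $p(s)\neq0$ you only get $p(\beta)<_S b_i$ with $p(\beta)\in S$, which does not contradict $b_i\in\Betm(S)$ unless you additionally prove that $p(\beta)\in\Bet(S)+S$ (this is repairable, e.g.\ by writing an $S_w$-homogeneous binomial of degree $\beta$, setting $t=1$ and decomposing it $S$-homogeneously over a minimal generating set of $\ia$, but you neither state nor prove such a step).

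Second, and more seriously, the step that is supposed to carry the unique-presentation hypothesis — ruling out two Betti elements $(b_i,\lambda)\neq(b_i,\lambda')$ of $S_w$ over the same $b_i$ — rests on the inequality $\sum_\lambda\beta_{1,(b_i,\lambda)}(\plx[t]/\iaw)\le\beta_{1,b_i}(\plx/\ia)$, which is the \emph{reverse} of what \cite[Theorem 8.29]{MS} gives: as recorded in the remark after Theorem \ref{inclusion}, one has $\beta_{1,a}(\plx/\ia)\le\beta_{1,a}(\plx[t]/\iaw)$ in the $S$-grading with $\deg_S(t)=0$; Betti numbers can only grow under Gr\"obner degeneration, and in Example \ref{scroll} the $S$-degree $(3,6)$ has $\beta_1=0$ for $\ia$ but $\beta_1=1$ for $\iaw$, so your coarse inequality is false even for uniquely presented $S$. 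Your fallback ("a minimal generating binomial of $\iaw$ specializes at $t=1$ to a minimal generating binomial of $\ia$") fails in the same example: $ad^2-c^3t^7$ is a minimal generator of $\iaw$, but $ad^2-c^3$ is not a minimal generator of $\ia$. What makes the claim true in the degrees $b_i$ is precisely the fiber description for uniquely presented semigroups, $\pi_{\A}^{-1}(b_i)=\{\alpha_i,\beta_i\}$ \cite[Section 3]{GO}: it forces $\pi_{\A_w}^{-1}(b_i,\lambda)$ to consist of at most the two points $(\alpha_i,\lambda-\alpha_i\cdot w)$, $(\beta_i,\lambda-\beta_i\cdot w)$, and for $\lambda>\alpha_i\cdot w$ both have positive last coordinate (the corresponding binomial is divisible by $t$), so such $(b_i,\lambda)$ cannot be a Betti element. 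The paper's proof runs entirely through this fiber computation, showing $|\pi_{\A_w}^{-1}(b_i,\lambda_i)|=2$ and invoking \cite[Corollary 5]{GO}; since your argument never touches the fibers, the hypothesis that $S$ is uniquely presented is never actually brought to bear, and both halves of your case analysis remain unproved as written.
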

\begin{proof}
Let $\{g_1,\ldots,g_r\}\subset\ia$ be the only minimal binomial generating set of $\ia$. Using Buchberger's algorithm, we extend this set to a Gr\"obner basis $\{g_1,\ldots,g_r,g_{r+1},\ldots,g_s\}$ with respect to $>_w$. In particular, $g_{r+1},\ldots,g_s$ are binomials as well. We can assume that for each $j\in\{r+1,\ldots,s\}$, the binomial $g_j$ is not an scalar multiple of $g_i$, for all $i\in\{1,\ldots,r\}$. 

We have that $\{(g_1)_t,\ldots,(g_s)_t\}$ generates $\iaw$. We claim that, for $i\in\{1,\ldots,r\}$, the binomial $(g_i)_t$ is not generated by $\{(g_j)_t\}_{j\neq i}$. Indeed, suppose this is the case for some $i$. Lemma \ref{min w} implies that $\{g_j\}_{j\neq i}$ is a generating set of $\ia$. By removing redundant elements of this set, we obtain a minimal binomial generating set of $\ia$ not containing $g_i$. This contradicts the uniqueness of the minimal binomial generating set of $\ia$. 

Consider any subset of $\{(g_i)_t\}_{i=1}^s$ that minimally generates $\iaw$. By the previous paragraph, such a subset must contain $\{(g_1)_t,\ldots,(g_r)_t\}$. For each $i\in\{1,\ldots,r\}$ we write $(g_i)_t=x^{\alpha_i}-x^{\beta_i}t^{\tau_i}, \mbox{ for some }\tau_i\in\N,$ where $\pi_{\A_w}(\alpha_i,0)=\pi_{\A_w}(\beta_i,\tau_i)=(b_i,\lambda_i)\in\Bet(S_w)$.

By the uniqueness condition on $S$, each $b_i\in\Bet(S)$ is Betti-minimal and $\pi_{\A}^{-1}(b_i)=\{\alpha_i,\beta_i\}$ for each $i\in\{1,\ldots,r\}$ \cite[Section 3]{GO}. This implies that 
$$\pi_{\A_w}^{-1}(b_i,\lambda_i)=\{(\alpha_i,0),(\beta_i,\tau_i)\}.$$
Indeed, if $\pi_{\A_w}(\gamma,l)=(b_i,\lambda_i)$ then $\gamma=\alpha_i$ or $\gamma=\beta_i$ implying that $l=0$ or $l=\tau_i$, respectively.

Summarizing, for each $i\in\{1,\ldots,r\}$ we have $(b_i,\lambda_i)\in\Bet(S_w)$ and $|\pi_{\A_w}^{-1}(b_i,\lambda_i)|=2$. We conclude that $(b_i,\lambda_i)$ is Betti-minimal \cite[Corollary 5]{GO}.
\end{proof}

\begin{coro}\label{all betti min}
Let $S$ be a uniquely presented affine semigroup. Let $w\in\N^n$ be such that some minimal generating set of $\ia$ is also a Gr\"obner basis with respect to some refined order $>_w$. Then all Betti elements of $S_w$ are Betti-minimal. In particular, $\iaw$ is also uniquely presented.
\end{coro}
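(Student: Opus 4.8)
The plan is to derive Corollary \ref{all betti min} directly from Theorem \ref{betti min} by exploiting the extra hypothesis, namely that there is a minimal generating set $\{g_1,\ldots,g_r\}$ of $\ia$ that is \emph{itself} a Gröbner basis with respect to $>_w$. First I would invoke Remark \ref{gente}: since $\{g_1,\ldots,g_r\}$ is a Gröbner basis with respect to $>_w$, the set $\{(g_1)_t,\ldots,(g_r)_t\}$ generates $\iaw$. The point is that there are no extra generators $g_{r+1},\ldots,g_s$ to worry about, so the degeneration ideal $\iaw$ is generated by exactly $r$ binomials, one for each original generator. Because $S$ is uniquely presented, $r=|\Bet(S)|$, and by Theorem \ref{inclusion} these $r$ binomials have distinct $S_w$-degrees $(b_1,\lambda_1),\ldots,(b_r,\lambda_r)$ (distinct because the $b_i$ are distinct). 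Hence $\Bet(S_w)=\{(b_1,\lambda_1),\ldots,(b_r,\lambda_r)\}$ and the map $b_i\mapsto(b_i,\lambda_i)$ is a bijection $\Bet(S)\to\Bet(S_w)$.

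Next I would argue that $\{(g_1)_t,\ldots,(g_r)_t\}$ is actually a \emph{minimal} generating set of $\iaw$. One clean way: the $S_w$-degrees $(b_i,\lambda_i)$ are pairwise distinct, and by the computation in the proof of Theorem \ref{betti min} (using uniqueness of the presentation of $S$) each fiber $\pi_{\A_w}^{-1}(b_i,\lambda_i)$ has exactly two elements, so the graded piece of $\iaw$ in degree $(b_i,\lambda_i)$ is one-dimensional; a generating set of $\iaw$ must contain an element of degree $(b_i,\lambda_i)$ for each $i$ (since $(b_i,\lambda_i)\in\Bet(S_w)$, i.e., $\beta_{1,(b_i,\lambda_i)}(\plx[t]/\iaw)\neq 0$ by Remark \ref{betti-el-betti-num}), forcing minimality by the cardinality count. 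Alternatively one can cite Lemma \ref{min w} together with the fact that $\{g_1,\ldots,g_r\}$ is already minimal: if $\{(g_1)_t,\ldots,(g_k)_t\}$ with $k<r$ generated $\iaw$, then $\{g_1,\ldots,g_k\}$ would generate $\ia$, contradicting minimality.

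Now the conclusion: Theorem \ref{betti min} already shows that each $(b_i,\lambda_i)$ is Betti-minimal. Since $\Bet(S_w)=\{(b_1,\lambda_1),\ldots,(b_r,\lambda_r)\}$ with no further elements, \emph{all} Betti elements of $S_w$ are Betti-minimal, i.e., $\Bet(S_w)=\Betm(S_w)$. Finally, to get unique presentation of $\iaw$, I would apply the criterion of Remark \ref{betti vs unique}: we need $\Bet(S_w)=\Betm(S_w)$, which we have just established, together with $\beta_{1,(b_i,\lambda_i)}(\plx[t]/\iaw)=1$ for each $i$. This last equality follows from the one-dimensionality of the graded piece of $\iaw$ in degree $(b_i,\lambda_i)$, established above via $|\pi_{\A_w}^{-1}(b_i,\lambda_i)|=2$; equivalently, it is recorded in the proof of Theorem \ref{betti min}. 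Hence $\iaw$ is uniquely presented.

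The main obstacle, and the only genuinely non-formal step, is verifying that $\{(g_1)_t,\ldots,(g_r)_t\}$ is a \emph{minimal} generating set of $\iaw$ and that each degree $(b_i,\lambda_i)$ carries a one-dimensional graded piece — i.e., that the passage to the degeneration does not merge distinct Betti degrees or introduce multiplicity. Both points are secured by the fiber computation $\pi_{\A_w}^{-1}(b_i,\lambda_i)=\{(\alpha_i,0),(\beta_i,\tau_i)\}$ already carried out inside the proof of Theorem \ref{betti min}, so in the write-up I would simply reuse that computation rather than redo it. Everything else is the bookkeeping of counting generators and quoting Remarks \ref{gente}, \ref{betti-el-betti-num}, and \ref{betti vs unique}.
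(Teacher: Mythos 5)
Your proposal is correct and takes essentially the same route as the paper: degenerate the minimal generating set that is already a Gr\"obner basis, deduce via Lemma \ref{min w} that $\{(g_1)_t,\ldots,(g_r)_t\}$ is a minimal generating set of $\iaw$, so every Betti element of $S_w$ is of the form $(b_i,\lambda_i)$ and is Betti-minimal by Theorem \ref{betti min}, and then conclude unique presentation from Remark \ref{betti vs unique}. Your additional verification that $\beta_{1,(b_i,\lambda_i)}=1$ in each Betti degree, via the fiber computation $\pi_{\A_w}^{-1}(b_i,\lambda_i)=\{(\alpha_i,0),(\beta_i,\tau_i)\}$ from the proof of Theorem \ref{betti min}, only makes explicit what the paper leaves implicit in its appeal to that remark.
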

\begin{proof}
Let $\{g_1,\ldots,g_r\}\subset \ia$ be a minimal generating set that is also a Gr\"obner basis with respect to $>_w$. Then $\{(g_1)_t,\ldots,(g_r)_t\}\subset \iaw$ is a generating set. In addition, by Lemma \ref{min w}, it is minimal. Thus, every Betti element of $S_w$ is of the form $(b,\lambda)$, for some $\lambda\in\N$ and $b\in\Bet(S)$. By Theorem \ref{betti min}, such Betti elements are Betti-minimal. The last statement of the corollary follows from Remark \ref{betti vs unique}.
\end{proof}

Let us look at an example where the conditions of the previous corollary are satisfied.

\begin{exam}\label{exam uniq pres}
Let $S=\langle a,a+1,a+2 \rangle\subset\N$, where $a=2q\geq4$. The only minimal set of binomial generators of $\ia$ is $\{y^2-xz,x^{q+1}-z^q\}$ \cite[Theorem 15]{GO}. Let $w\in\N^3$ be such that $2w_2>w_1+w_3$. Then the leading terms of these two binomials with respect to any refined order $>_w$ are relatively prime. Hence, they form a Gr\"obner basis with respect to $>_w$. By Corollary \ref{all betti min}, $\iaw$ is uniquely presented.
\end{exam}

Now we show an example of a uniquely presented affine semigroup such that \textit{any} of its Gr\"obner degenerations is also uniquely presented.

As usual, let $\A=\{a_1,\ldots,a_n\}$. The Lawrence ideal of $\A$, denoted $I_{\Lambda(\A)}$, is the ideal of $\K[x_1,\ldots,x_n,y_1,\ldots,y_n]$ generated by
$$\Big\{x^uy^v-x^vy^u|\sum_{i=1}^{n}u_ia_i=\sum_{i=1}^{n}v_ia_i\Big\}.$$ 
This ideal is studied, for instance, in \cite[Chapter 7]{St}. There, Lawrence ideals are used as an auxiliary tool to compute Graver bases. The relevant fact for us is that Lawrence ideals are uniquely presented \cite[Corollary 16, Proposition 4]{OV}.

\begin{coro}\label{Lawrence}
Any Gr\"obner degeneration of the Lawrence ideal is uniquely presented.
\end{coro}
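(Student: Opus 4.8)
The plan is to reduce Corollary \ref{Lawrence} to Corollary \ref{all betti min} by exhibiting, for an \emph{arbitrary} weight vector $w$, a minimal generating set of the Lawrence ideal $I_{\Lambda(\A)}$ that is simultaneously a Gröbner basis with respect to some refined order $>_w$. The key structural feature of Lawrence ideals that makes this possible is the following well-known fact \cite[Chapter 7]{St}: every binomial $x^uy^v-x^vy^u$ in $I_{\Lambda(\A)}$ has the property that its two terms involve \emph{disjoint} sets of variables in an extremely rigid way --- namely, for the $i$-th pair of variables $(x_i,y_i)$, the first term carries $x_i^{u_i}y_i^{v_i}$ and the second carries $x_i^{v_i}y_i^{u_i}$. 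In particular, for any monomial order whatsoever, a reduced Gröbner basis of a Lawrence ideal consists of binomials of this same shape, and --- this is the crucial point --- the reduced Gröbner basis of $I_{\Lambda(\A)}$ with respect to \emph{any} term order equals its set of (primitive) elements up to sign, which coincides with the Graver basis; consequently every reduced Gröbner basis of $I_{\Lambda(\A)}$ has the same cardinality, equal to $\beta_1$, i.e. it is automatically a minimal generating set.

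Granting that, the proof is short. First I would fix an arbitrary $w\in\N^{2n}$ and refine it to a monomial order $>_w$. Second, take the reduced Gröbner basis $G$ of $I_{\Lambda(\A)}$ with respect to $>_w$. By the structural fact just recalled, $G$ is a minimal generating set of $I_{\Lambda(\A)}$ (its cardinality is forced to equal $\beta_1$, since for Lawrence ideals the reduced Gröbner basis is independent of the term order and equals the Graver basis, which generates). Third, observe that $G$ is by construction a Gröbner basis with respect to the refined order $>_w$. Fourth, invoke that Lawrence ideals are uniquely presented \cite[Corollary 16, Proposition 4]{OV}, so $I_{\Lambda(\A)}$ satisfies the hypotheses of Corollary \ref{all betti min} with this $w$. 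That corollary then yields that $(I_{\Lambda(\A)})_w = I_{\Lambda(\A)_w}$ is uniquely presented. Since $w$ was arbitrary, every Gröbner degeneration of the Lawrence ideal is uniquely presented.

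The main obstacle, and the step I would want to state carefully, is the claim that a minimal generating set of $I_{\Lambda(\A)}$ that is also a $>_w$-Gröbner basis exists for \emph{every} $w$ --- this is exactly what Corollary \ref{all betti min} requires as input, and it is \emph{not} automatic for general toric ideals (one cannot in general find a single generating set that is a Gröbner basis for all, or even for a prescribed, term order). What rescues us in the Lawrence case is the rigidity theorem of Sturmfels: for a Lawrence-type ideal, the reduced Gröbner basis, the Graver basis, and any minimal generating set all coincide (as sets of binomials, up to sign), independently of the chosen term order \cite[Theorem 7.1 and surrounding discussion]{St}. Thus the reduced Gröbner basis with respect to $>_w$ is already minimal, with no adjustment needed. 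I would phrase the proof so that this coincidence is the single cited fact doing the work, and then the conclusion is a one-line application of Corollary \ref{all betti min}. One should also double-check the harmless bookkeeping point that $\Lambda(\A)_w$ in the sense of Theorem \ref{main} --- adjoining the extra coordinate $(0,\ldots,0,1)$ to each of the $2n+?$ generators --- is what we mean by the degeneration; but this is immediate from the definitions and requires no real argument.
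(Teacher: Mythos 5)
Your proposal is correct and follows essentially the same route as the paper: both rest on Sturmfels' rigidity theorem for Lawrence ideals \cite[Theorem 7.1]{St} (minimal generating sets, reduced Gr\"obner bases for any order, and the Graver basis all coincide), which supplies for every $w$ a minimal generating set that is a $>_w$-Gr\"obner basis, and then conclude by Corollary \ref{all betti min} together with the fact that Lawrence ideals are uniquely presented.
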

\begin{proof}
It is known that any minimal binomial generating set $\{g_1,\ldots,g_r\}$ of $I_{\Lambda(\A)}$ is a reduced Gr\"obner basis with respect to any order \cite[Theorem 7.1]{St}. By Corollary \ref{all betti min}, any Gr\"obner degeneration of the Lawrence ideal is uniquely presented.
\end{proof}


\subsection{A further example of Gr\"obner degenerations preserving the uniqueness of a presentation}\label{unique min 2}

In this section we present a further example showing that Gr\"obner degenerations preserve the property of being uniquely presented. We stress that, as opposed to Example \ref{exam uniq pres} or Corollary \ref{Lawrence}, the results of this section do not rely on Corollary \ref{all betti min}.

\begin{pro}\label{grob1}
Let $S=\langle a,a+1,a+2 \rangle$, $a\in\N$, $a=2q+2$, $q\geq1$, and $w=(w_1,w_2,w_3)\in\N^3\setminus\{(0,0,0)\}$. The following are Gr\"obner bases of $\ia$ for the $w$-degrevlex order (the leading monomials are listed first):
\begin{enumerate}
\item Suppose $2w_2\geq w_1+w_3$. Then $G=\{y^2-xz,x^{q+2}-z^{q+1}\}.$

\item Suppose $2w_2<w_1+w_3$ and $(q+1)w_3\leq (q+2)w_1$. 
\begin{enumerate}
		\item Suppose $(q+2)w_3\leq (q+1)w_1+2w_2$. Then $G=\{xz-y^2,x^{q+2}-z^{q+1},x^{q+1}y^2-z^{q+2}\}.$
		\item Suppose $(q+1)w_1+2w_2<(q+2)w_3$. Then $G=\{xz-y^2,x^{q+2}-z^{q+1},z^{q+2}-x^{q+1}y^2\}.$
	\end{enumerate} 
\item Suppose $2w_2<w_1+w_3$, $(q+2)w_1<(q+1)w_3$.
\begin{enumerate}
\item\label{impor} If $(q+i+3)w_1<2(i+1)w_2+(q-i)w_3$ for all $0\leq i\leq q$, then
$G=\{xz-y^2,z^{q+1}-x^{q+2},y^2z^q-x^{q+3},y^4z^{q-1}-x^{q+4},\ldots,y^{2(q+1)}-x^{2q+3}\}$.
\item\label{impor2} Suppose there exists $n\in\N$, $n\leq q$ such that $2(n+1)w_2+(q-n)w_3\leq (q+n+3)w_1$, and for all $0\leq i <n$, $(q+i+3)w_1<2(i+1)w_2+(q-i)w_3$. Then 
$G=\{xz-y^2,z^{q+1}-x^{q+2},y^2z^q-x^{q+3},y^4z^{q-1}-x^{q+4},\ldots,y^{2n}z^{q-(n-1)}-x^{q+(n-1)+3},x^{q+n+3}-y^{2(n+1)}z^{q-n}\}.$
\end{enumerate}
\end{enumerate}
\end{pro}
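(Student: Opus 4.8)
The plan is to verify directly, in each of the listed cases, that the exhibited set $G$ is a Gr\"obner basis of $\ia$ for the $w$-degrevlex order by using Buchberger's criterion: first confirm $G \subset \ia$ (each binomial $x^u - x^v$ satisfies $\pi_{\A}(u) = \pi_{\A}(v)$ under $a_1 = a$, $a_2 = a+1$, $a_3 = a+2$ with $a = 2q+2$), then check that the leading terms listed are indeed the $w$-degrevlex leading terms under the stated inequalities on the $w_i$, and finally show that all $S$-polynomials reduce to zero modulo $G$. Throughout, the fact that $\ia = \langle y^2 - xz,\ x^{q+2} - z^{q+1}\rangle$ (equivalently, the toric ideal of the semigroup $\langle a, a+1, a+2\rangle$ with $a = 2q+2$) is generated by these two binomials is the known starting point, so $\langle G \rangle = \ia$ is immediate once one checks the generated ideal agrees, using the $S$-homogeneity of all binomials involved and an evaluation/substitution argument as in Example \ref{un betti w}.

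For Case (1) ($2w_2 \ge w_1 + w_3$) and Case (2)(a), the leading monomials $y^2$ and $x^{q+2}$ (resp.\ $xz, x^{q+2}$, together with $x^{q+1}y^2$) need to be checked against the weight inequalities, and then in Case (1) the leading terms $y^2$ and $x^{q+2}$ are relatively prime, so by the standard criterion (\cite[Corollary 2.3.4]{HH}, as used earlier in the excerpt) $G$ is automatically a Gr\"obner basis. The genuinely computational cases are (2) and (3), where the leading terms share variables: here I would compute the single relevant $S$-polynomial for each consecutive pair in the staircase $\{y^{2k}z^{q+1-k} - x^{q+k+2}\}_k$ (these binomials form a ``chain'' whose $S$-polynomials telescope using the relation $y^2 = xz$ in $\ia$), show the $S$-polynomial of $y^{2k}z^{q+1-k}-x^{q+k+2}$ with $y^{2(k+1)}z^{q-k}-x^{q+k+3}$ reduces to zero by one reduction step using $xz - y^2$ plus one step using an adjacent chain element, and verify that the weight hypotheses in (3)(a) and (3)(b) are exactly what is needed for the listed monomial to be leading at each stage. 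The inductive inequality $(q+i+3)w_1 < 2(i+1)w_2 + (q-i)w_3$ for $0 \le i \le q$ (resp.\ its failure at step $n$) governs precisely whether $x^{q+i+3}$ or $y^{2(i+1)}z^{q-i}$ is the leading term of the $(i+1)$-st chain binomial, so the case split in the statement matches the combinatorics of the degrevlex order on the chain.

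I would organize the proof as: (i) a preliminary lemma listing the full set of binomials $\{y^{2k}z^{q+1-k} - x^{q+k+2} : 0 \le k \le q+1\}$ together with $y^2 - xz$, observing they all lie in $\ia$ and that $\langle$ any ``initial segment'' of the chain plus $xz - y^2 \rangle = \ia$; (ii) for each fixed weight vector $w$, determine which segment of the chain appears as $G$ and the orientation of each binomial (which side is the leading term), reading this off the inequalities; (iii) apply Buchberger's criterion, noting that non-adjacent chain elements have comequal or comparable leading terms so their $S$-polynomials reduce trivially, while each adjacent pair gives one $S$-polynomial that reduces to $0$ via the relation $xz = y^2$. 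The bookkeeping in (ii) — matching each of the four (really six, counting sub-cases) hypothesis regions to the correct leading-term pattern — is the main obstacle, since one must carefully compare $w$-weights of competing monomials of the same total degree and invoke the tie-breaking (degrevlex) rule correctly; once the leading terms are pinned down, the $S$-polynomial reductions in (iii) are short and essentially uniform across the cases.
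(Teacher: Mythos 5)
Your proposal takes essentially the same route as the paper's proof: both start from the known generating set $\{y^2-xz,\ x^{q+2}-z^{q+1}\}$ and apply Buchberger, with the coprime-leading-term shortcut in case (1) and, in cases (2)--(3), the chain of binomials $y^{2(i+1)}z^{q-i}-x^{q+i+3}$ arising as successive $S$-polynomials whose orientations (and hence where the computation stops) are dictated by exactly the stated weight inequalities, all remaining $S$-pairs reducing via $xz-y^2$. The only difference is organizational — the paper runs Buchberger's algorithm forward from the two generators, while you verify Buchberger's criterion for the exhibited $G$ — and the underlying computations coincide.
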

\begin{proof}
Let $p_1=y^2-xz$ and $p_2=x^{q+2}-z^{q+1}$. A straightforward computation shows that $\ia=\langle p_1,p_2 \rangle$. More generally, generators for toric ideals of semigroups generated by intervals can be found in \cite[Theorem 8]{GR2}.
\begin{enumerate}
	\item Suppose $2w_2\geq w_1+w_3$. Then the initial monomials of $p_1$ and $p_2$ are relatively prime. Hence, $G=\{p_1,p_2\}$ is a Gr\"obner basis of $\ia$.

	\item	Suppose $2w_2<w_1+w_3$ and $(q+1)w_3\leq (q+2)w_1$. The corresponding $S$-polynomial of $p_1$ and $p_2$ is $S(p_1,p_2)=z^{q+2}-x^{q+1}y^2$. An application of Buchberger's algorithm shows that $G=\{p_1,p_2,S(p_1,p_2)\}$ is a Gr\"obner basis of $\ia$, whether $z^{q+2}$ or $x^{q+1}y^2$ is the initial monomial of $S(p_1,p_2)$.
	\item Suppose $2w_2<w_1+w_3$, $(q+2)w_1<(q+1)w_3$. Consider $p_3:=S(p_1,p_2)=x^{q+3}-y^2z^q$.
   
   If $(q+3)w_1\geq 2w_2+qw_3$ then, by Buchberger's algorithm, $G=\{p_1,p_2,p_3\}$ is a Gr\"obner basis of $\ia$. 
   
   Suppose $(q+3)w_1< 2w_2+qw_3$. Let $p_4:=S(p_1,p_3)=x^{q+4}-y^4z^{q-1}$. In addition, notice $S(p_2,p_3)=x^{q+3}z-x^{q+2}y^{2}=x^{q+2}p_1$.
			
    If $(q+4)w_1\geq 4w_2+(q-1)w_3$ then, by Buchberger's algorithm, $G=\{p_1,p_2,p_3,p_4\}$ is a Gr\"obner basis of $\ia$.

    Suppose $(q+4)w_1< 4w_2+(q-1)w_3$. If $q=1$, then by Buchberger's algorithm $G=\{p_1,p_2,p_3,p_4\}$ is a Gr\"obner basis of $\ia$. Otherwise $q\geq 2$. Let $p_5=S(p_1,p_4)=x^{q+5}-y^6z^{q-2}$. In addition, notice $S(p_2,p_4)=z^{2}x^{q+4}-y^4x^{q+2}=(x^{q+3}z+x^{q+2}y^{2})p_1$ and $S(p_3,p_4)=zx^{q+4}-y^{2}x^{q+3}=x^{q+3}p_1$.				
				
    If $(q+5)w_1\geq 6w_2+(q-2)w_3$ then, by Buchberger's algorithm, $G=\{p_1,p_2,p_3,p_4,p_5\}$ is a Gr\"obner basis of $\ia$.

    Suppose $(q+5)w_1< 6w_2+(q-2)w_3$. If $q=2$, then by Buchberger's algorithm $G=\{p_1,p_2,p_3,p_4,p_5\}$ is a Gr\"obner basis of $\ia$. Otherwise we continue as before and we have two possibilities:      
		\begin{itemize}
			\item[i.] For all $n\in\N$, $n\leq q$, $(q+i+3)w_1<2(i+1)w_2+(q-i)w_3$. In this case we obtain the statement \ref{impor} of this proposition.
			\item[ii.] There exists $n\in \N$, $n<q$ such that $(q+n+3)w_1\geq 2(n+1)w_2+(q-n)w_3$, and for all $0\leq i <n$, $(q+i+3)w_1<2(i+1)w_2+(q-i)w_3$. In this case we obtain the statement \ref{impor2} of this proposition.
		\end{itemize}
\end{enumerate}
\end{proof}

\begin{lem}\label{mon-deg}
Let $\ia$ be a toric ideal and $G_0=\left\{ x^{\alpha_i}-x^{\beta_i}:i\in\{1,\dots,m\}\right\}$ be a minimal generating set of $\ia$. Let $M_S=\{x^{\alpha_i},x^{\beta_i}:i\in\{1,\dots, m\}\}$. Suppose that the ideal $\langle M_S\rangle$ is minimally generated by $M_S$. In addition, suppose that $\pi_{\A}(\alpha_i)\neq \pi_{\A}(\alpha_j)$, for all $i,j\in\{1,\dots,m\}$, $i\neq j$. Then $\ia$ is uniquely presented and $G_0$ is the unique minimal generating set of binomials, up to scalar multiplication.
\end{lem}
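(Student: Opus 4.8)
The plan is to show that the given binomial generating set is the unique minimal generating set of binomials by combining two facts: first, that it \emph{is} a minimal generating set, and second, that $\Bet(\ia)=\Betm(\ia)$ together with $\beta_{1,a}=1$ for every Betti element $a$, which by Remark \ref{betti vs unique} is exactly the criterion for $\ia$ to be uniquely presented.

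First I would establish minimality of $\{x^{\alpha_i}-x^{\beta_i}\}$ as a generating set. The hypothesis that $\langle M_S\rangle$ is minimally generated by $M_S$ means in particular that no monomial $x^{\alpha_i}$ or $x^{\beta_i}$ divides another monomial in $M_S$; equivalently, all $2m$ monomials are pairwise incomparable in the divisibility order. If some generator $x^{\alpha_k}-x^{\beta_k}$ lay in the ideal generated by the others, then modulo those others it would be zero, and reducing $x^{\alpha_k}-x^{\beta_k}$ by the remaining binomials — whose leading terms (in any term order, or just the monomials appearing) cannot divide $x^{\alpha_k}$ or $x^{\beta_k}$ by the incomparability — shows this is impossible. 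So the set is minimal, hence $|\Bet(\ia)|$ is at most $m$; and since the $S$-degrees $\pi_{\A}(\alpha_i)$ are pairwise distinct, these $m$ binomials carry $m$ distinct degrees, so in fact $|\Bet(\ia)|=m$ and $\Bet(\ia)=\{\pi_{\A}(\alpha_i):i=1,\dots,m\}$ with $\beta_{1,\pi_{\A}(\alpha_i)}(\pls)=1$ for each $i$.

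Next I would verify $\Bet(\ia)=\Betm(\ia)$, i.e.\ that each $\pi_{\A}(\alpha_i)$ is minimal in $\Bet(\ia)$ with respect to $>_S$. Suppose $\pi_{\A}(\alpha_j)>_S\pi_{\A}(\alpha_i)$ for $i\neq j$, so $\pi_{\A}(\alpha_j)-\pi_{\A}(\alpha_i)\in S$, say equal to $\pi_{\A}(\gamma)$. Then $\pi_{\A}(\alpha_j)=\pi_{\A}(\alpha_i+\gamma)=\pi_{\A}(\beta_i+\gamma)$, so the fiber $\pi_{\A}^{-1}(\pi_{\A}(\alpha_j))$ contains $\alpha_j,\beta_j,\alpha_i+\gamma,\beta_i+\gamma$. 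The point is that $\beta_{1,a}=1$ forces this fiber's associated graph (two vertices joined by an edge for a connected minimal syzygy) to have a very restricted shape; more concretely, since $x^{\alpha_j}$ is a minimal monomial generator of $\langle M_S\rangle$ it is not divisible by $x^{\alpha_i}$ nor by $x^{\beta_i}$, so neither $\alpha_i+\gamma$ nor $\beta_i+\gamma$ can equal $\alpha_j$ unless $\gamma=0$; and $\gamma=0$ would give $\pi_{\A}(\alpha_i)=\pi_{\A}(\alpha_j)$, contradicting the distinctness hypothesis. Running the symmetric argument with $\beta_j$ in place of $\alpha_j$ (again using that $x^{\beta_j}\in M_S$ is not divisible by $x^{\alpha_i}$ or $x^{\beta_i}$) rules out the remaining cases, so no such $j$ exists and $\pi_{\A}(\alpha_i)\in\Betm(\ia)$.

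With both conditions of Remark \ref{betti vs unique} in hand — $\Bet(\ia)=\Betm(\ia)$ and $\beta_{1,a}(\pls)=1$ for all $a\in\Bet(\ia)$ — we conclude $\ia$ is uniquely presented. Finally, the unique minimal presentation must, for each Betti degree $\pi_{\A}(\alpha_i)$, consist of a single binomial supported on the fiber $\pi_{\A}^{-1}(\pi_{\A}(\alpha_i))$; since $\beta_{1,\pi_{\A}(\alpha_i)}=1$ and the fiber's syzygy contributes exactly the class of $x^{\alpha_i}-x^{\beta_i}$, up to scalar the unique generator in that degree is $x^{\alpha_i}-x^{\beta_i}$, giving the claimed unique minimal generating set of binomials. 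I expect the main obstacle to be the middle step: pinning down precisely why $\beta_{1,a}=1$ plus the monomial-minimality hypothesis forces the fiber over each Betti element to contain exactly the two exponent vectors $\alpha_i,\beta_i$ and nothing that would create a $>_S$-comparability — this is where one must be careful about how the distinctness of the $\pi_{\A}(\alpha_i)$ interacts with divisibility of monomials, and it may be cleanest to invoke the graph-of-a-fiber description of minimal presentations from \cite{GR1} rather than argue by hand.
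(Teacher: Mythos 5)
The gap is in your middle step, the claimed verification that $\Bet(S)=\Betm(S)$. Supposing $\pi_{\A}(\alpha_j)-\pi_{\A}(\alpha_i)=\pi_{\A}(\gamma)\in S$, what you actually prove is only that $\alpha_i+\gamma$ and $\beta_i+\gamma$ are distinct from $\alpha_j$ and $\beta_j$; no contradiction follows from that. It merely says the fiber $\pi_{\A}^{-1}(\pi_{\A}(\alpha_j))$ contains at least four elements, and nothing you have established excludes this: $\beta_{1,\pi_{\A}(\alpha_j)}=1$ constrains the number of connected components of the graph on that fiber, not its size, and the two extra points $\alpha_i+\gamma,\beta_i+\gamma$ share the factor $x^{\gamma}$, so they can sit inside an existing component without disturbing $\beta_{1,\pi_{\A}(\alpha_j)}=1$. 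The fact that for a uniquely presented semigroup the fiber over each Betti element is exactly $\{\alpha_i,\beta_i\}$ --- which is what your ``restricted shape of the fiber'' would need to mean, and what your argument tacitly uses --- is a consequence of unique presentation (this is how the paper uses \cite{GO} in the proof of Theorem \ref{betti min}), i.e.\ of the statement being proved, so appealing to it here is circular. Failure of Betti-minimality means only $\pi_{\A}(\alpha_j)-\pi_{\A}(\alpha_i)\in S$ for some $i\neq j$; the divisibility observations rule out one special way this could manifest, not the comparability itself. Since your whole plan is to feed $\Bet(S)=\Betm(S)$ and $\beta_{1,a}=1$ into Remark \ref{betti vs unique}, this unproved step is the heart of the matter.

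For contrast, the paper's proof does not go through Betti-minimality at all. It invokes \cite[Proposition 4.1]{CKT}: because $\langle M_S\rangle$ is minimally generated by $M_S$, every monomial of $M_S$ is indispensable, i.e.\ occurs, paired with some monomial, in every minimal binomial generating set $G$ of $\ia$. If some such $G$ contained neither $x^{\alpha_l}-x^{\beta_l}$ nor $x^{\beta_l}-x^{\alpha_l}$, then $x^{\alpha_l}$ and $x^{\beta_l}$ would occur in two distinct binomials of $G$, both of $S$-degree $\pi_{\A}(\alpha_l)$, forcing $\beta_{1,\pi_{\A}(\alpha_l)}\geq 2$; on the other hand, since the given binomials generate $\ia$ and the degrees $\pi_{\A}(\alpha_i)$ are pairwise distinct, $\beta_{1,\pi_{\A}(\alpha_i)}\leq 1$ for every $i$ --- a contradiction. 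Your first two steps (minimality of the given set and $\beta_{1,\pi_{\A}(\alpha_i)}=1$) essentially reproduce the ``$\leq 1$'' half of this argument, but you are missing a substitute for the indispensability input. To repair your route you would need to deduce from the hypotheses that the fiber over each $\pi_{\A}(\alpha_i)$ is exactly $\{\alpha_i,\beta_i\}$, or otherwise control all minimal binomial generating sets of $\ia$; that is precisely what the result of \cite{CKT} supplies, and the divisibility remarks alone do not.
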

\begin{proof}
By \cite[Proposition 3.1]{CKT}, for any monomial $m\in M_S$ and for any minimal generating set of binomials $G$ of $\ia$, there exists $\gamma\in \N^n$, such that $m-x^{\gamma}\in G$ or $x^{\gamma}-m\in G$. 

Suppose there exists $l\in\{1,\dots,m\}$ such that $x^{\alpha_l}-x^{\beta_l}\notin G$ and $x^{\beta_l}-x^{\alpha_l}\notin G$, for some minimal generating set of binomials $G$ of $\ia$. As $\pi_{\A}(\alpha_l)=\pi_{A}(\beta_l)$, by Remark \ref{betti-el-betti-num} $\beta_{1,\pi_{\A}(\alpha_l)}(\plx/\ia)\geq 2$. 

By hypothesis, $\pi_{\A}(\alpha_i)\neq\pi_{\A}(\alpha_j)$ for all $i\neq j$, and $G_0$ is a minimal generating set of $\ia$. By Remark \ref{betti-el-betti-num} $\beta_{1,\pi_{\A}(\alpha_i)}(\plx/\ia)=1$, for each $i\in\{1,\ldots,m\}$. This is a contradiction. Thus $G_0$ is the unique minimal generating set of binomials, up to scalar multiplication.
\end{proof}

\begin{pro}\label{min grob1}
Let $S=\langle a,a+1,a+2 \rangle$, $a\in\N$, $2<a$ and $a=2q+2$. Then, for any $w\in\N^3\setminus\{(0,0,0)\}$, the ideal $\iaw$ is uniquely presented.
\end{pro}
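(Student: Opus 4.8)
The plan is to produce an explicit binomial generating set of $\iaw$ by combining Proposition \ref{grob1} with Theorem \ref{main} and Remark \ref{gente}, and then to check that it (or a small pruning of it) satisfies the hypotheses of Lemma \ref{mon-deg}, applied with $\A$ replaced by $\A_w$. Since the $w$-degrevlex order refines $w$, if $G=\{x^{\alpha_i}-x^{\beta_i}\}_i$ is one of the Gr\"obner bases displayed in Proposition \ref{grob1} --- with $x^{\alpha_i}$ the leading monomial, so $\alpha_i\cdot w\geq\beta_i\cdot w$ --- then by Remark \ref{gente} and Theorem \ref{main} the set $G_t=\{x^{\alpha_i}-x^{\beta_i}t^{\tau_i}\}_i$, where $\tau_i:=\alpha_i\cdot w-\beta_i\cdot w\geq 0$, generates $\iaw$. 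The first step is to record, in each of the five cases 1, 2(a), 2(b), 3(a), 3(b), the monomials occurring in $G_t$ together with the numbers $\pi_{\A}(\alpha_i)$ and $\tau_i$; recall that the degree of the $i$-th binomial in $G_t$ is $\pi_{\A_w}(\alpha_i,0)=(\pi_{\A}(\alpha_i),\alpha_i\cdot w)$.

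Two structural facts should make the verification uniform. First, one checks that each $G$ in Proposition \ref{grob1} is in fact a reduced Gr\"obner basis; hence the leading monomials $x^{\alpha_i}$ --- the $t$-free monomials of $G_t$ --- form an antichain under divisibility, no $x^{\alpha_i}$ divides any trailing monomial $x^{\beta_j}$, and their images $\pi_{\A}(\alpha_i)$ are pairwise distinct (a direct computation, using $a=2q+2\geq 4$), which already yields the distinctness of degrees demanded by Lemma \ref{mon-deg}. Second, the only divisibilities still to be excluded are those involving the monomials $x^{\beta_i}t^{\tau_i}$, and each of these reduces to a comparison of the exponents $\tau_i$; the strict inequalities that define each case (for instance $2w_2<w_1+w_3$ throughout cases 2 and 3, or $(q+i+3)w_1<2(i+1)w_2+(q-i)w_3$ in case 3(a)) force the relevant $\tau_i$ to be strictly monotone, ruling out such divisibilities. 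Granting these two facts, Lemma \ref{mon-deg} applies and gives that $\iaw$ is uniquely presented.

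A separate treatment is needed for the boundary situations in which some $\tau_i$ vanishes --- in particular for weight vectors with zero entries. There $x^{\alpha_i}-x^{\beta_i}$ is $w$-homogeneous, and I expect that the only place this causes a genuine divisibility among the monomials of $G_t$ is case 2(b) with $(q+1)w_3=(q+2)w_1$ (where the exponent attached to $x^{q+2}-z^{q+1}$ is $0$); there one simply deletes a redundant generator, since a direct check gives $(z^{q+2}-x^{q+1}y^2)_t=x^{q+1}(xz-y^2t^{w_1+w_3-2w_2})-z\,(x^{q+2}-z^{q+1})$, and the remaining two generators then satisfy Lemma \ref{mon-deg}. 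In the other sub-cases the antichain property persists even when a $\tau_i$ equals $0$, so no pruning is required.

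The step I expect to be the main obstacle is the bookkeeping in case 3, where $|G|$ grows with $q$. One must verify simultaneously that $\{y^{2j}z^{q+1-j}\}_j$ is an antichain, that the powers $\{x^{q+2+j}t^{c_j}\}_j$ form an antichain because $c_j=2jw_2+(q+1-j)w_3-(q+2+j)w_1$ is positive and strictly decreasing in $j$ (as $2w_2<w_1+w_3$), that no cross-divisibility with $xz$ or $y^2t^{w_1+w_3-2w_2}$ occurs, and --- in case 3(b) --- that the extra generator $x^{q+n+3}-y^{2(n+1)}z^{q-n}t^{c'}$, in which the leading and trailing monomials are interchanged relative to the pattern, creates no new divisibility; this last point reduces to the inequality $c'<w_1+w_3-2w_2$, which follows from the defining inequalities of case 3(b). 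Here the relevant $\pi_{\A}$-values are $2a+2$ together with the arithmetic progression $(q+2)a,(q+3)a,\dots$, so distinctness of degrees is immediate. The remaining cases 1, 2(a), 2(b) each involve only three binomials and come down to the same two finite checks.
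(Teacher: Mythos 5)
Your proposal is correct and follows essentially the same route as the paper: degenerate the Gr\"obner bases of Proposition \ref{grob1} via Remark \ref{gente}, check that the resulting monomials minimally generate the monomial ideal they span and that their $\A_w$-degrees are distinct, and apply Lemma \ref{mon-deg}, with the single degenerate situation $(q+1)w_3=(q+2)w_1$ in case 2(b) resolved by discarding the redundant generator through the very identity the paper uses. The only cosmetic differences are that you treat case 1 by Lemma \ref{mon-deg} instead of Corollary \ref{all betti min} and organize the divisibility checks via reducedness of the Gr\"obner bases, which if anything makes the bookkeeping in case 3 (including the $c'<w_1+w_3-2w_2$ check the paper leaves implicit) more explicit.
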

\begin{proof}
We divide the proof based on the three cases of Proposition \ref{grob1}. The ideal $\ia$ is uniquely presented by \cite[Theorem 15]{GO}. In addition, its unique minimal generating set is $\{y^2-xz,x^{q+2}-z^{q+1}\}$ (see the proof of Proposition \ref{grob1}). Hence, case 1 follows from Corollary \ref{all betti min}.

Case 2(a). From the Gr\"obner basis of $\ia$ we can produce a generating set of $\iaw$. The monomials appearing in this generating set are:
\begin{align}
\{&xz,y^2t^{w_1+w_3-2w_2},x^{q+2},z^{q+1}t^{(q+2)w_1-(q+1)w_3},\notag\\
&x^{q+1}y^2,z^{q+2}t^{(q+1)w_1+2w_2-(q+2)w_3}\}.\notag
\end{align}
By the inequalities satisfied by $w$ in this case, it follows that this set is a minimal generating set of the ideal it generates. In addition, the $\A_w$-degrees of these monomials are:
$$\{(2a+2,w_1+w_3),((q+2)a,(q+2)w_1),((q+2)(a+2),(q+1)w_1+2w_2)\}.$$
Since these degrees are different, we conclude that $\iaw$ is uniquely presented by Lemma \ref{mon-deg}.

Case 2(b). As in the previous case, we obtain the following set of monomials:
\begin{align}
\{&xz,y^2t^{w_1+w_3-2w_2},x^{q+2},z^{q+1}t^{(q+2)w_1-(q+1)w_3},\notag\\
&z^{q+2},x^{q+1}y^2t^{(q+2)w_3-(q+1)w_1-2w_2}\}.\notag
\end{align}
If $(q+2)w_1>(q+1)w_3$, then proceed exactly as in case 2(a). Suppose that $(q+2)w_1=(q+1)w_3$. Then $(q+2)w_3-(q+1)w_1-2w_2=w_1+w_3-2w_2=:\delta.$
Thus, $z^{q+2}-x^{q+1}y^2t^{\delta}=x^{q+1}(xz-y^2t^{\delta})-z(x^{q+2}-z^{q+1}).$ This implies that $\iaw$ is generated by $\{xz-y^2t^{\delta},x^{q+2}-z^{q+1}\}$. Hence, $\iaw$ is uniquely presented by Lemma \ref{mon-deg}.

Case 3(a). As before, we obtain the following set of monomials:
\begin{align}
\{&xz,y^2t^{w_1+w_3-2w_2},z^{q+1},x^{q+2}t^{(q+1)w_3-(q+2)w_1}\}\notag\\
&\cup\{y^{2(i+1)}z^{q-i},x^{q+i+3}t^{2(i+1)w_2+(q-i)w_3-(q+i+3)w_1}\}_{0 \leq i \leq q}.\notag
\end{align}
By the inequalities satisfied by $w$ in this case, it follows that this set is a minimal generating set of the ideal it generates. It remains to prove that the $\A_w$-degrees are all different. It is enough to show this for the first entry of the $\A_w$-degrees. Indeed, the first entries are 
$$\{2a+2,(q+2)a,\ldots,(q+i+3)a,\ldots,(2q+3)a\}.$$
We conclude that $\iaw$ is uniquely presented by Lemma \ref{mon-deg}.

Case 3(b). Proceed exactly as in case 3(a).
\end{proof}

\begin{rem}\label{conjdAD}
By computing Gr\"obner bases and using Lemma \ref{mon-deg}, we verified that Proposition \ref{min grob1} also holds for other families of uniquely presented numerical semigroups generated by intervals. Moreover, we used the same method to study this property for other families of numerical semigroups. Our computations give enough evidence to conjecture that the uniqueness of a presentation of a toric ideal is preserved under Gr\"obner degenerations.
\end{rem}


\section{M\"obius functions}

In this final section we study M\"obius functions of affine semigroups. Several authors have provided explicit formulas for M\"obius functions of some families of semigroups. In particular, the case of semigroups with a unique Betti element was studied in \cite{CGMR}. As a final application of Theorem \ref{main}, we present some relations among the M\"obius functions of $S$ and $S_w$, in the case where both $S$ and $S_w$ have a unique Betti element.

Let $S\subset\Z^d$ be a pointed affine semigroup. As in previous sections, consider the following partial order: for $x,y\in\Z^d$, $x <_S y$ if $y-x\in S$. An interval on $\Z^d$ with respect to $<_S$ is defined as $[x,y]_{\Z^d}:=\{z\in\Z^d|x\leq_{S}z\leq_{S}y\}.$
Denote as $c_l(x,y)$ the cardinality of 
$$\{\{x<_{S}z_1<_{S}z_2<_{S}\cdots<_{S}z_l=y\}\subset[x,y]_{\Z^d}\}.$$
The M\"obius function of $S$, denoted $\mu_S$, is defined as 
$$\mu_S:\Z^d\to\Z, \mbox{ }\mbox{ } y\mapsto\sum_{l\geq0}(-1)^lc_l(0,y).$$
This sum is always finite \cite[Section 2]{CGMR}. Notice that if $y\notin S$ then $\mu_S(y)=0$ (since, in this case, $c_l(0,y)=0$ for all $l\geq0$). Thus, we restrict the domain of $\mu_S$ to $S$.

The following formula is the starting point of our discussion.

\begin{teo}\cite[Theorem 4.1]{CGMR}\label{formula}
Let $S=\langle a_1,\ldots,a_n\rangle\subset\Z^{d}$ be a pointed affine semigroup. Suppose that $\Bet(S)=\{b\}$. Then,
$$\mu_S(z)=\sum_{j=1}^t(-1)^{|A_j|}\binom{k_{A_j}+n-d-1}{k_{A_j}},$$
where $\{A_1,\ldots,A_t\}=\{A\subset\{1,\ldots,n\}|\exists k_{A}\in\N \mbox{ such that }z=\sum_{i\in A}a_i+k_Ab\}$.
\end{teo}

We introduce some notation that we use throughout this section.

Let $w\in\N^n$ and $S_w=\langle a'_1,\ldots,a'_n,a'_{n+1}\rangle\subset\Z^{d+1}$, where $a'_i=(a_i,w_i)$ for $i\in\{1,\ldots,n\}$ and $a'_{n+1}=(\zz,1)$. Assume that $\Bet(S)=\{b\}$ and $\Bet(S_w)=\{(b,d_w)\}$. For $z\in S$ and $(z,\lambda)\in S_w$, denote:
\begin{itemize}
\item $\az:=\{A\subset\{1,\ldots,n\}|\exists k_{A}\in\N \mbox{ such that }z=\sum_{i\in A}a_i+k_Ab\}$.
\item $\bz:=\{B\subset\{1,\ldots,n+1\}|\exists k_{B}\in\N \mbox{ such that }(z,\lambda)=\sum_{i\in B}a'_i+k_B(b,d_w)\}$.
\end{itemize}

\begin{lem}\label{b-a}
Let $z\in S$ and $(z,\lambda)\in S_w$. Then,
\begin{itemize}
\item[(i)] $\bz':=\{B\setminus\{n+1\}|B\in\bz\}\subset\az$. In addition, if $B\setminus\{n+1\}=A$, for some $A\in\az$, then $k_{B}=k_A$.
\item[(ii)] If $B\neq C$ in $\bz$ then $B\setminus\{n+1\}\neq C\setminus\{n+1\}$. In particular, $|\bz|=|\bz'|\leq|\az|$.
\end{itemize}
\end{lem}
\begin{proof}
Let $B\in\bz$. There exists $k_B\in\N$ such that $(z,\lambda)=\sum_{i\in B}a'_i+k_B(b,d_w)$. In particular,
$$z=\sum_{i\in B}a_i+k_Bb=\sum_{i\in B\setminus\{n+1\}}a_i+k_Bb.$$
Thus, $B\setminus\{n+1\}\in\az$. This shows the first part of $(i)$. For the same $B$, let $A\in\az$ be such that $A=B\setminus\{n+1\}$. Then $\sum_{i\in B\setminus\{n+1\}}a_i+k_Bb=z=\sum_{i\in A}a_i+k_Ab$. Hence, $k_A=k_B$.

Now we prove $(ii)$. The result is clear if $n+1$ is contained in $B$ and $C$ or if it is not contained in either. Thus, we can assume that $n+1\in B$ and $n+1\notin C$. Suppose that $B\setminus\{n+1\}=C\setminus\{n+1\}$. In particular, there exist $k_B,k_C\in\N$ such that $\sum_{i\in B\setminus\{n+1\}}a_i+k_Bb=z=\sum_{i\in C\setminus\{n+1\}}a_i+k_Cb$. Hence $k_B=k_C$. This implies
$$\sum_{i\in B}a_i'=(z,\lambda)-k_B(b,d_w)=(z,\lambda)-k_C(b,d_w)=\sum_{i\in C}a_i'.$$
In particular, $\sum_{i\in B}w_i=\sum_{i\in C}w_i$. This is a contradiction since, by the assumption on $C$,
$$\sum_{i\in C}w_i=\sum_{i\in C\setminus\{n+1\}}w_i<1+\sum_{i\in C\setminus\{n+1\}}w_i=1+\sum_{i\in B\setminus\{n+1\}}w_i=\sum_{i\in B}w_i.$$
\end{proof}

\begin{lem}\label{lambdas}
Let $z\in S$. Suppose that $\az=\{A_1,\ldots,A_r\}$, $r\geq1$. Let $l_j=\sum_{i\in A_j}w_i+k_{A_j}d_w$, for each $j\in\{1,\ldots,r\}$. Let $\lambda\in\N$ be such that $(z,\lambda)\in S_w$. Then,
$$\bz\neq\emptyset\Longleftrightarrow \lambda=l_j\mbox{ or }\lambda=l_j+1 \mbox{ for some }j\in\{1,\ldots,r\}.$$
\end{lem}
\begin{proof}
Let $B\in\bz$. We have two cases:
\begin{itemize}
\item $n+1\notin B$. By $(i)$ of Lemma \ref{b-a}, $B=A_j$ for some $j$. Hence, $(z,\lambda)=\sum_{i\in B}a_i'+k_B(b,d_w)=\sum_{i\in A_j}a_i'+k_B(b,d_w)$. We also know that $k_B=k_{A_j}$. We conclude that $\lambda=\sum_{i \in B}w_i+k_Bd_w=\sum_{i\in A_j}w_i+k_{A_j}d_w=l_j$.
\item $n+1\in B$. Like in the previous item, $B\setminus\{n+1\}=A_j$ and $k_B=k_{A_j}$, for some $j$. It follows that $\lambda=\sum_{i\in B}w_i+k_Bd_w=(1+\sum_{i\in A_j}w_i)+k_{A_j}d_w=1+l_j$.
\end{itemize}
Now suppose that $\lambda=l_j$ (resp. $\lambda=l_j+1$) for some $j\in\{1,\ldots,r\}$. Then, by definition, $A_j\in \bz$ (resp. $A_j\cup\{n+1\}\in\bz$).
\end{proof}

We are now ready to prove the main result of this section.

\begin{teo}\label{mu hiper}
Let $S\subset\Z^{d}$ be a pointed affine semigroup. Let $w\in\N^n$. Assume that $\Bet(S)=\{b\}$ and $\Bet(S_w)=\{(b,d_w)\}$. Then the M\"obius function of $S_w$ can be computed in terms of data of  $S$. More precisely, for $(z,\lambda)\in S_w$, $\mu_{S_w}(z,\lambda)=0$ whenever $\az=\emptyset$ or $\lambda\notin\{l_j,l_j+1\}_{j=1}^r$. If $\az=\{A_1,\ldots,A_r\}$ and $\lambda\in\{l_j,l_j+1\}_{j=1}^r$, then
\small{
$$\mu_{S_w}(z,\lambda)=\sum_{\lambda=l_j}(-1)^{|A_j|}\binom{k_{A_j}+n-d-1}{k_{A_j}}-\sum_{\lambda=l_j+1}(-1)^{|A_j|}\binom{k_{A_j}+n-d-1}{k_{A_j}}.$$
}
\end{teo}
\begin{proof}
The fact that $S$ is pointed implies that $S_w$ is pointed. By Theorem \ref{formula}:
\[\mu_{S_w}(z,\lambda)=
\left\{
\begin{array}{rll}
&0,&\bz=\emptyset\\
&\sum_{B\in\bz}(-1)^{|B|}\binom{k_B+n-d-1}{k_B}, &\bz\neq\emptyset.\tag{*}\\
\end{array}
\right.
\]
Lemmas \ref{b-a} $(ii)$ and \ref{lambdas}, together with (*) imply that $\mu_{S_w}(z,\lambda)=0$ whenever $\az=\emptyset$ or $\lambda\notin\{l_j,l_j+1\}_{j=1}^r$ (in both cases $\bz=\emptyset$). 

Let $\az=\{A_1,\ldots,A_r\}$, $r\geq1$. 
Let $j_0\in\{1,\ldots,r\}$ be such that $\lambda=l_{j_0}$ or $\lambda=l_{j_0}+1$. Let $\az^0:=\{A_j|\lambda=l_j\}$ and $\az^1:=\{A_j|\lambda=l_j+1\}$. The proof of Lemma \ref{lambdas} shows that $\az^0=\{B\in\bz|n+1\notin B\}$ and $\az^1=\{B\setminus\{n+1\}|B\in\bz, n+1\in B\}$. Using these facts and Lemma \ref{b-a}, (*) implies:
\small{
\begin{align}
\mu_{S_w}(z,\lambda)&=\sum_{\substack{B\in\bz \\ n+1\notin B}}(-1)^{|B|}\binom{k_B+n-d-1}{k_B}+\sum_{\substack{B\in\bz \\ n+1\in B}}(-1)^{|B|}\binom{k_B+n-d-1}{k_B}\notag\\
&=\sum_{A_j\in\az^0}(-1)^{|A_j|}\binom{k_{A_j}+n-d-1}{k_{A_j}}+\sum_{A_j\in\az^1}(-1)^{|A_j|+1}\binom{k_{A_j}+n-d-1}{k_{A_j}}\notag\\
&=\sum_{A_j\in\az^0}(-1)^{|A_j|}\binom{k_{A_j}+n-d-1}{k_{A_j}}-\sum_{A_j\in\az^1}(-1)^{|A_j|}\binom{k_{A_j}+n-d-1}{k_{A_j}}.\notag
\end{align}
}
\end{proof}

\begin{exam}
Let $S$ and $w$ be a numerical semigroup and a weight vector, respectively, such that $|\Bet(S)|=|\Bet(S_w)|=1$ (see Example \ref{un betti w}). It is known that $|\az|\leq1$ for all $z\in S$ \cite[Proof of Corollary 4.2]{CGMR}. By Theorem \ref{formula},
\[\mu_{S}(z)=
\left\{
\begin{array}{rll}
&0,&\az=\emptyset,\\
&(-1)^{|A|}\binom{k_{A}+n-2}{k_{A}}, &\az=\{A\}.\\
\end{array}
\right.
\]
Comparing this formula with Theorem \ref{mu hiper} we obtain
\[\mu_{S_w}(z,\lambda)=
\left\{
\begin{array}{rll}
&0,&\az=\emptyset,\\
&\mu_S(z), &\az=\{A\},\lambda=l,\\
&-\mu_S(z), &\az=\{A\},\lambda=l+1,\\
&0, &\az=\{A\},\lambda\notin\{l,l+1\}.
\end{array}
\right.
\]
\end{exam}

\begin{rem}
The formulas of the previous example are also valid for any semigroup $S$ satisfying the conditions of Theorem \ref{mu hiper} with the extra assumption $|\az|\leq1$ for all $z\in S$.
\end{rem}

\section*{Acknowledgements}

We would like to thank the referees for the very careful reading and many valuable comments that greatly improved the presentation of this paper.

\vspace{.5cm}
\noindent{\footnotesize \textsc {Hern\'an de Alba Casillas, Universidad Aut\'onoma de Zacatecas - CONACYT, Calzada Solidaridad y Paseo de la Bufa, Zacatecas, Zac. 98000, Mexico,} \\
halba@uaz.edu.mx}\\
{\footnotesize \textsc {Daniel Duarte, Centro de Ciencias Matem\'aticas, UNAM, Campus Morelia, Antigua Carretera a P\'atzcuaro 8701, Col. Ex-Hacienda San Jos\'e de la Huerta, 58089, Morelia, Michoac\'an, Mexico,} \\
adduarte@matmor.unam.mx}\\
{\footnotesize \textsc {Ra\'ul Vargas Antuna, Centro de Ciencias Matem\'aticas, UNAM, Campus Morelia, Antigua Carretera a P\'atzcuaro 8701, Col. Ex-Hacienda San Jos\'e de la Huerta, 58089, Morelia, Michoac\'an, Mexico,} \\
raul.vargas@cimat.mx}

\end{document}